\documentclass{amsart}

\usepackage{amssymb}
\usepackage{graphicx}
\usepackage{MnSymbol}

\usepackage{enumerate}

\usepackage{amsmath,amscd}

\usepackage{amsthm}

\title{On $\kappa$-existentially closed groups with few automorphisms}

\author{Samuel M. Corson}
\address{E. T. S. I. I. Universidad Polit\'{e}cnica de Madrid, Jos\'{e} Guti\'{e}rrez Abascal 2, 28006 Madrid, Spain}
\email{sammyc973@gmail.com}
\author{Saharon Shelah}
\address{Einstein Institute of Mathematics, The Hebrew University of Jerusalem, Jerusalem 91904 Israel}
\address{Department of Mathematics, Rutgers University, Piscataway, NJ 08854 USA}
\email{shelah@math.huji.ac.il}

\theoremstyle{definition}\newtheorem{theorem}{Theorem}
\theoremstyle{definition}
\theoremstyle{definition}

\theoremstyle{definition}

\theoremstyle{definition}

\theoremstyle{definition}
\theoremstyle{definition}

\theoremstyle{definition}

\theoremstyle{definition}\newtheorem{corollary}[theorem]{Corollary}
\theoremstyle{definition}
\theoremstyle{definition}\newtheorem{definition}[theorem]{Definition}
\theoremstyle{definition}
\theoremstyle{definition}
\theoremstyle{definition}
\theoremstyle{definition}
\theoremstyle{definition}\newtheorem{lemma}[theorem]{Lemma}
\theoremstyle{definition}
\theoremstyle{definition}
\theoremstyle{definition}\newtheorem{claim}[theorem]{Claim}
\theoremstyle{definition}
\theoremstyle{definition}
\theoremstyle{definition}\newtheorem{notation}[theorem]{Notation}

\newcommand{\Aut}{\operatorname{Aut}}

\newcommand{\cf}{\operatorname{cf}}
\newcommand{\modvar}{\operatorname{mod}}

\newcommand{\Len}{\operatorname{Len}}

\begin{document}

\keywords{existentially closed groups, }

\subjclass[2020]{Primary 20A15, 20B27; Secondary 20E15}

\thanks{The first author is partially supported by RYC2023-045493-I.  The work of the second author is supported by ISF 2320/23: The Israel Science Foundation (ISF) (2023-2027).  Paper number 1280 on Shelah's archive.}

\begin{abstract}
We give constructions of $\kappa$-existentially closed groups having few automorphisms, giving negative solutions to a question of Kaya and Kuzucuo\u{g}lu in the Kourovka Notebook.

\end{abstract}

\maketitle

\begin{section}{Introduction}

Recall that a group $G$ is \emph{$\kappa$-existentially closed} if every system $\Sigma$ of equations and inequations with coefficients in $G$, with $|\Sigma| < \kappa$, having a solution in a larger group $H \geq G$ has a solution in $G$ itself.  This notion has also been called \emph{$\kappa$-algebraically closed}, or \emph{algebraically closed groups} when $\kappa = \aleph_0$  \cite{Scott}, \cite[\textsection IV.8]{LS}.

Kaya and Kuzucuo\u{g}lu have asked the following (see \cite[Problem 20.40]{KhMaz}).

\vspace{.25cm}

\noindent \textbf{Question.}  Let $G$ be a $\kappa$-existentially closed group of cardinality $\lambda > \kappa$, where $\kappa$ is a regular cardinal.  Is it true that $|\operatorname{Aut}(G)| = 2^{\lambda}$?

\vspace{.25cm}

\noindent It is known \cite{KayaKuzucuogluLongobardiMaj} that a $\kappa$-existentially closed group of regular cardinality $\kappa$ will have $|\operatorname{Aut}(G)| = 2^{\kappa}$, and so the question deals with the remaining case.  We show that the answer to the question is often negative.

\begin{theorem}\label{powercardinal}  Suppose that $\aleph_1 \leq \cf(\kappa) = \kappa < \mu$ and $\lambda = 2^{\mu}$.  Then there exists a $\kappa$-existentially closed group $G$ of cardinality $\lambda$ with $|\Aut(G)| = \lambda$.
\end{theorem}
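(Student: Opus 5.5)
Since $|\Aut(G)| \geq |G|/|Z(G)|$ and a $\kappa$-existentially closed group has trivial center, every such $G$ of cardinality $\lambda$ already has at least $\lambda$ (inner) automorphisms. So the task is to build $G$ with $|\Aut(G)|\le\lambda$. Because $\lambda=2^\mu$, it suffices to find a subset $X\subseteq G$ with $|X|\le\mu$ such that every automorphism of $G$ is determined by its restriction to $X$, and to arrange that there are at most $\lambda$ possible such restrictions. The restriction condition alone gives at most $\lambda^{\mu}=2^{\mu}=\lambda$ automorphisms. So the plan is to construct $G$ with a ``rigid core'' of size $\mu$ which every automorphism must essentially preserve, and which generates $G$ in an appropriate closure sense.

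\textbf{Construction.} Build an increasing continuous chain $\langle G_\alpha:\alpha<\lambda^{+}\rangle$? No: $G$ must have size exactly $\lambda$. Instead, I will build $G$ as a union of a chain $\langle G_i : i<\kappa\rangle$, each $G_{i+1}$ closing $G_i$ under solutions of all systems of size $<\kappa$ over $G_i$, for example by an amalgamation/HNN-type $\kappa$-existential closure. Regularity of $\kappa$ (and $\kappa\geq\aleph_1$) ensures that the union is $\kappa$-existentially closed, since any system of size $<\kappa$ has its coefficients in some $G_i$. Start with $G_0$ of size $\lambda$, built so that every element is ``coded'' by a $\mu$-sequence of elements from a fixed set $X$ of size $\mu$. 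Concretely, take $G_0$ related to the groups $\mathrm{Sym}$ or unrestricted wreath/direct products indexed by $\mu$, e.g.\ elements correspond to functions $\mu\to\{0,1\}$. Here $X$ consists of distinguished elements $x_\gamma$ ($\gamma<\mu$), and each $g\in G$ is uniquely determined by the set of $\gamma$ for which some fixed first-order (existential) relation holds between $g$ and $x_\gamma$. For instance, $g$ commutes with $x_\gamma$ iff $f_g(\gamma)=1$, with the relation preserved by the closure steps. Then an automorphism $\pi$ is determined by $\pi\restriction X$: $\pi(g)$ is the unique element whose relation-pattern with $\pi(X)$ matches that of $g$ with $X$. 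The number of choices of $\pi\restriction X$ is at most $\lambda^\mu=\lambda$.

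\textbf{Main obstacle.} The hard step is making this coding survive the $\kappa$-existential closure. Adding solutions of arbitrary systems tends to create new elements with unintended relations to the $x_\gamma$, and to destroy uniqueness, since the closure adds many elements with the same pattern. I would address this by not requiring full determination from $X$ directly. Instead I would require that each element of $G$ is definable (via a term or small-support construction) from fewer than $\kappa$ parameters in $G_0$, and that $G_0$ itself is pointwise determined by $X$ using centralizer patterns. The key lemma is then that any automorphism of $G$ maps $X$ into a set where the centralizer patterns remain readable, because the patterns are expressed by $\forall\exists$-type properties that are invariant under automorphisms. To make the centralizer relations robust, I would use free-product-with-amalgamation normal forms, which determine centralizers in the extensions exactly. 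Finally, I would count: each element of $G$ is a word in fewer than $\kappa$ (in fact finitely many) elements of $G_0$ together with the closure witnesses. The closure witnesses must be chosen canonically, for instance as an automorphism-invariant closure process, so that $\pi$ is determined by $\pi\restriction G_0$, hence by $\pi\restriction X$. That yields $|\Aut(G)|\le\lambda$.
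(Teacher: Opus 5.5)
There is a genuine gap, located at the step you yourself call the main obstacle. Your counting endgame is right and matches the paper's (Claim \ref{agreement}: $\pi$ is determined by its values on $\mu$ many elements, so $|\Aut(G)|\le\lambda^{\mu}=\lambda$). So is coding by commutation with $\mu$ many elements via an independent family. But the commutation pattern of $\pi(g)$ with $\pi(X)$ identifies $\pi(g)$ only among elements of a set you already know, namely $\pi(G_0)$. Even there it works only for involutions, since $g$ and $g^{-1}$ have the same pattern.

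Nothing in your construction constrains $\pi(G_0)$. $G_0$ is not definable in $G$, and an automorphism of $G$ does not respect the process that built $G$, so choosing closure witnesses ``canonically'' puts no restriction on $\pi$. The definability idea is also false here. Suppose $A\subseteq G$, $|A|<\kappa$ and $g\notin\langle A\rangle$. The HNN extension of $G$ by a stable letter centralizing $\langle A\rangle$, together with Britton's lemma and $\kappa$-existential closedness, gives some $t\in G$ centralizing $A$ but not $g$. Conjugation by $t$ then fixes $A$ pointwise and moves $g$.

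Nor can you locate a coded subgroup as the full centralizer $C_G(y)$ of some element. In a $\kappa$-existentially closed group every centralizer contains non-involutions: solve $[s,y]=1$, $s^2\neq 1$ using $G\times\mathbb{Z}$. Centralizers can only be pinned down relative to an earlier stage $G_\delta$, and that is useless unless you know $\pi(G_\delta)=G_\delta$.

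The paper supplies exactly this missing control, in three steps.
\begin{enumerate}
\item It uses Shelah's coloring $c\colon[\lambda]^2\to\lambda$ (Lemma \ref{beautifulcoloringexhaustion}). It interleaves stages (Lemmas \ref{intoquickgeneration} and \ref{morecontrol}, via $C'(\frac{1}{6})$ small cancellation) that embed $G_\zeta$ into a group generated by involutions $x_{\alpha,p,\ell}$, $\alpha<\lambda$, with $g_{c\{\alpha,\beta\}}\in\langle x_{\alpha,1,1},x_{\alpha,2,1},x_{\beta,1,2},x_{\beta,2,2}\rangle$. For any $\pi\in\Aut(G)$, the sets $U_i=\{\alpha:\pi(x_{\zeta_1,\alpha,p,\ell})\in G_i\}$ form a continuous chain with union $\lambda$. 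Hence $c''[U_\eta]^2=\lambda$ for some $\eta<\overline{\kappa}$, which gives $\pi(G_\zeta)\subseteq G_\eta$ and so a club of $\delta$ with $\pi(G_\delta)=G_\delta$.
\item For $\delta$ in a stationary set $S_{\epsilon_1}$, it adds HNN stable letters $y$ whose centralizer \emph{in $G_\delta$} is exactly the coded elementary abelian $2$-group $H_{\epsilon_1,p,\ell}$. If $\pi_1,\pi_2$ agree on the $y$'s, choose $\delta\in S_{\epsilon_1}$ in both clubs; then $\pi_1(H_{\epsilon_1,p,\ell})=C_{G_\delta}(\pi_1(y))=C_{G_\delta}(\pi_2(y))=\pi_2(H_{\epsilon_1,p,\ell})$.
\item Only then do the $z_\gamma$ determine $\pi$ pointwise on $H_{\epsilon_1,p,\ell}$. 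Since such coded subgroups recur cofinally and generate all earlier stages, $\pi$ is determined on all of $G$.
\end{enumerate}
Your proposal has an analogue of step 3 at stage $0$ only. It has nothing playing the role of steps 1 and 2, which carry the weight of the proof.
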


For example, there exists an $\aleph_1$-existentially closed group of cardinality $2^{\aleph_2}$ having automorphism group of size $2^{\aleph_2}$.   It is easy to see that a $\kappa$-complete group $G$ has trivial center, implying that the conjugation map gives a monomorphism $G \rightarrow \Aut(G)$, so $|\Aut(G)| \geq |G|$.  Thus the number of automorphisms in Theorem \ref{powercardinal} is the minimal possible.  We give some lead-up lemmas in Section \ref{backgroundlemmas} and prove Theorem \ref{powercardinal} in Section \ref{Theproof}.  In Section \ref{Sizeofclosed} we show that for uncountable $\kappa$ there is a $\kappa$-existentially closed group of infinite size $\mu$ if and only if $\mu^{<\kappa} = \mu$ (Corollary \ref{biconditional}).

\end{section}

\begin{section}{Background lemmas}\label{backgroundlemmas}

We provide some preliminary results to prepare for the proof of Theorem \ref{powercardinal}.

\begin{notation}
We shall mostly follow the set theoretic conventions in \cite{J}.  Letters $\alpha$, $\beta$, $\gamma$, $\delta$, $\epsilon$, $\zeta$, $\eta$ will be used for ordinals and $\theta$, $\kappa$, $\lambda$, $\mu$ for infinite cardinals.  Each ordinal is the set of ordinals which are strictly below itself, and a cardinal is an ordinal which does not biject with a strictly smaller ordinal.  We write $\alpha + 1$ for the successor ordinal to $\alpha$ and $\kappa^+$ for the successor cardinal to $\kappa$.  The expression $|X|$ denotes the cardinality of $X$ (the unique cardinal with which $X$ bijects) and $[X]^{\mathfrak{n}}$ is the collection of subsets of $X$ of cardinality $\mathfrak{n}$.  Cardinal exponentiation is written in the standard way $\kappa^{\lambda}$ and $\kappa^{< \lambda} = \sup\{\kappa^{\theta}: \theta \text{ is a cardinal and }\theta < \lambda\}$.  We will use parenthetical notation $(X_i : i \in I)$ to exhibit an indexed sequence (the angular $\langle  \rangle$ will be used to denote subgroup generation).  If $f: X \rightarrow Y$ and $Z \subseteq X$ then $f''Z = \{f(x): x \in Z\}$.  We 

We let $\cf(\alpha)$ denote the \emph{cofinality} of $\alpha$ (i.e. the minimal size of an unbounded subset of $\alpha$).  An infinite cardinal $\kappa$ is \emph{regular} if $\cf(\kappa) = \kappa$, and is \emph{singular} otherwise.  Recall that each successor cardinal $\kappa^+$ is regular.  A subset $C$ of a cardinal $\kappa$ is \emph{club} if it is closed (in the order topology on $\kappa$) and unbounded in $\kappa$.  Recall that if $\kappa$ has uncountable cofinality then the intersection of two clubs in $\kappa$ is again club.  A subset $S$ of an infinite cardinal $\kappa$ is \emph{stationary} if it has nonempty intersection with each club in $\kappa$.

A sequence $(X_{\alpha}: \alpha < \gamma)$ which is $\subseteq$-increasing (i.e. $X_{\alpha_0} \subseteq X_{\alpha_1}$ if $\alpha_0 \leq \alpha_1$) is \emph{continuous} if $X_{\beta} = \bigcup_{\alpha < \beta} X_{\alpha}$ for $\beta < \gamma$ a non-zero limit ordinal.  Recall that a collection $\mathcal{J}$ of subsets of $\kappa$ is \emph{independent} if for distinct $X_1$, $\ldots$, $X_n$, $Y_1$, $\ldots$, $Y_m \in \mathcal{J}$ the intersection $$X_1 \cap \cdots \cap X_n \cap (\kappa \setminus Y_1) \cap \cdots \cap (\kappa \setminus Y_m)$$ has cardinality $\kappa$.
\end{notation}

\begin{lemma}\label{independentset}\cite[Lemma 7.7]{J}
For each infinite cardinal $\kappa$ there exists an independent collection $\mathcal{J}$ of subsets such that $|\mathcal{J}| = 2^{\kappa}$.
\end{lemma}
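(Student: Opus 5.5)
The plan is the classical Fichtenholz--Kantorovich--Hausdorff argument. We replace $\kappa$ by a set of the same cardinality that carries more structure, build the independent family there, and transfer it back along a bijection. Independence (the required intersections having cardinality $\kappa$) is preserved under bijections, so this transfer is harmless. Let
$$I = \{(F, \mathcal{F}) : F \in [\kappa]^{<\aleph_0},\ \mathcal{F} \subseteq \mathcal{P}(F)\}.$$
Since there are $\kappa$ finite subsets $F$ of $\kappa$ and only finitely many $\mathcal{F}$ for each $F$, we have $|I| = \kappa$. For each $A \subseteq \kappa$ put
$$X_A = \{(F, \mathcal{F}) \in I : A \cap F \in \mathcal{F}\}.$$
The family $\mathcal{J} = \{X_A : A \subseteq \kappa\}$ is the candidate, and it remains to verify two things.

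The first point is that $A \mapsto X_A$ is injective, which gives $|\mathcal{J}| = 2^\kappa$. If $A \neq B$, pick $\xi \in A \triangle B$ and let $F = \{\xi\}$, $\mathcal{F} = \{A \cap F\}$. Then $A \cap F \neq B \cap F$, so $(F, \mathcal{F}) \in X_A \setminus X_B$.

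The second point is independence. Take distinct $A_1, \ldots, A_n, B_1, \ldots, B_m \subseteq \kappa$. For each of the finitely many pairs of distinct sets among them, choose a point of their symmetric difference, and let $F_0$ be the finite set of chosen points. For any finite $F \supseteq F_0$, the traces $A_i \cap F$ and $B_j \cap F$ are pairwise distinct. Put $\mathcal{F} = \{A_1 \cap F, \ldots, A_n \cap F\}$. Then $(F, \mathcal{F})$ lies in every $X_{A_i}$, and it lies in no $X_{B_j}$ because $B_j \cap F \notin \mathcal{F}$. There are $\kappa$ many finite $F \supseteq F_0$ (for instance $F_0 \cup \{\xi\}$ with $\xi \in \kappa \setminus F_0$), and each gives a different element of $I$, so the intersection has cardinality $\kappa$.

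The only delicate step is this last one: all the traces must stay distinct simultaneously, and that is exactly what forces $F$ to contain a separating point for every pair. Everything else is routine cardinal arithmetic, using $|[\kappa]^{<\aleph_0}| = \kappa$ for infinite $\kappa$.
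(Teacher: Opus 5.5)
Your proof is correct. It is essentially the standard argument from the cited source (Jech, Lemma 7.7), which the paper invokes without reproducing: the index set of pairs $(F,\mathcal{F})$, the sets $X_A$, and the separating-points argument for independence all match, and the only cosmetic difference is that you separate every pair among the $A_i, B_j$ where only the pairs $(A_i, B_j)$ need separating to guarantee $B_j \cap F \notin \mathcal{F}$.
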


The following result is  due to the second author.  Note that when we invoke this lemma, we'll have $(C)_2$ hold for quite trivial reasons, and therefore we do not bother to define what Jonsson cardinals are.

\begin{lemma}\label{beautifulcoloringexhaustion} \cite[Theorem 3.5(3)]{She1}  If $\lambda \geq \aleph_1$ there exists $c: [\lambda]^2 \rightarrow \lambda$ such that in the following listed statements, the conjunction $(C)_1 \wedge (C)_2 \wedge (C)_3$ implies $(C)_4$.

\begin{itemize}

\item[$(C)_1$] $\kappa = \cf(\kappa) \neq \cf(\lambda)$.

\item[$(C)_2$] If $\lambda = \chi^+$ with $\chi$ singular and $\chi = \sup\{\theta < \delta : \theta \text{ is a regular Jonsson cardinal}\}$ then $\kappa \neq \cf(\chi)$.

\item[$(C)_3$] $(U_ {\epsilon}: \epsilon \leq \kappa)$ is a continuous $\subseteq$-increasing sequence whose union is $\lambda$.

\item[$(C)_4$]$c''[U_{\epsilon}]^2 = \lambda$ for some $\epsilon < \kappa$.

\end{itemize}

\end{lemma}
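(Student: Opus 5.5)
The first step is to reduce $(C)_4$ to a statement about a single large member of the chain. Let $(U_\epsilon:\epsilon\le\kappa)$ be as in $(C)_3$. If $\kappa>\lambda$, then each $\alpha<\lambda$ enters the chain at some stage below $\kappa$. The supremum of these $\lambda$ many stages is still below $\kappa$, because $\kappa$ is regular. So $U_\epsilon=\lambda$ for some $\epsilon<\kappa$, and $(C)_4$ only requires that $c$ be onto on $[\lambda]^2$; any surjective colouring does this.

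If $\kappa<\lambda$, consider the cardinals $|U_\epsilon|$ for $\epsilon<\kappa$. Suppose they were all below $\lambda$. When $\lambda$ is regular, a union of $\kappa$ many sets of size $<\lambda$ has size $<\lambda$, which is impossible. When $\lambda$ is singular, $\lambda$ would be the supremum of a $\kappa$-sequence of smaller cardinals, so $\cf(\lambda)=\kappa$, contradicting $(C)_1$. Hence in every case some $U_\epsilon$ with $\epsilon<\kappa$ has size $\lambda$, and the chain is "thick" from that point on.

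This does not finish the proof. Asking $c''[A]^2=\lambda$ for every $A\in[\lambda]^\lambda$ is the negation of $\lambda$ being Jonsson, which cannot be proved in ZFC. So $c$ must also exploit the continuity of the chain and the clubs it produces. For regular $\lambda$ I would build $c$ from Shelah's or Todorcevic's walks on ordinals. Fix a $C$-sequence $(C_\delta:\delta<\lambda)$, chosen with club guessing on $S^\lambda_\theta$ for a suitable regular $\theta\ne\kappa$, and let $c(\alpha,\beta)$ be read off, through a fixed decoding function, from the last step of the walk from $\beta$ to $\alpha$.

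The set $E=\{\delta<\lambda:\ U_\epsilon\cap\delta \text{ is unbounded in }\delta \text{ for all large }\epsilon\}$ contains a club. By pigeonhole over the $\kappa$ many indices, using $\cf(\delta)\neq\kappa$ on a stationary set, there is a single $\epsilon<\kappa$ such that $U_\epsilon$ is cofinal in stationarily many $\delta$ guessed by the $C$-sequence. At those $\delta$ the walk lands in $C_\delta\cap U_\epsilon$. The decoding is arranged so that the colours realised there cover an unbounded, and in the end all, of $\lambda$: one codes, for each $\gamma<\lambda$, a pair in $U_\epsilon$ with colour $\gamma$, using $|U_\epsilon|=\lambda$.

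For singular $\lambda$, I write $\lambda=\sup_{i<\cf\lambda}\lambda_i$ with regular $\lambda_i$ and glue the colourings $c_{\lambda_i}$ by induction on $\lambda$. I use $\cf(\lambda)\ne\kappa$ to find one $\epsilon$ with $U_\epsilon$ large in unboundedly many blocks $\lambda_i$.

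I expect the hardest step to be the regular case when $\lambda$ is a successor of a singular or is weakly inaccessible. There the standard $\lambda\not\to[\lambda]^2_\lambda$ proofs break down, and the club-guessing must be run relative to the chain. This is exactly where the Jonsson hypothesis $(C)_2$ enters: it excludes the single bad value $\kappa=\cf(\chi)$ for which the walks along a scale in $\prod$ of regular Jonsson cardinals cannot be forced to meet $U_\epsilon$. I would first try to handle it with a scale of length $\chi^+$ modulo the bounded ideal on $\cf(\chi)$, together with the fact that the chain's cofinality $\kappa$ differs from $\cf(\chi)$.
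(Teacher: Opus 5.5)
The paper does not prove this lemma. It quotes it from Shelah [She1, Theorem 3.5(3)], so a blind proof has to reconstruct Shelah's argument, and your proposal does not. Its correct part is the easy part. For $\kappa>\lambda$, regularity of $\kappa$ gives $U_\epsilon=\lambda$ for some $\epsilon<\kappa$. For $\kappa<\lambda$, some $U_\epsilon$ has size $\lambda$, using $(C)_1$ when $\lambda$ is singular.

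With a strong colouring this settles $\lambda=\mu^+$ for $\mu$ regular, by Todorcevic's $\mu^+\nrightarrow[\mu^+]^2_{\mu^+}$. Your gluing plan for singular $\lambda$ also works, provided you take blocks $[\lambda_i,\lambda_{i+1})$ with each $\lambda_{i+1}$ the successor of a regular cardinal. That way you never need to pass through hard regular cardinals. The remaining regular cases, weakly inaccessible $\lambda$ and $\lambda=\chi^+$ with $\chi$ singular, are the real content of the theorem. There your sketch has a genuine gap. (Minor point: $c''[A]^2=\lambda$ for all $A\in[\lambda]^\lambda$ is $\lambda\nrightarrow[\lambda]^2_\lambda$, which is strictly stronger than $\lambda$ not being Jonsson.)

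The only thing you extract from the chain is that some $U_\epsilon$ is cofinal in stationarily many $\delta$ guessed by the $C$-sequence. Every unbounded $A\subseteq\lambda$ has this property, since its limit points form a club and club guessing hits that club. So if the rest of your argument were sound, it would give $c''[A]^2=\lambda$ for every unbounded $A$, that is, $\lambda\nrightarrow[\lambda]^2_\lambda$. This is false for weakly compact $\lambda$: apply $\lambda\to(\lambda)^2_2$ to the partition of pairs according to whether $c$ takes the value $0$. Yet club guessing on $S^\lambda_\theta$ with $\theta^+<\lambda$ holds there in ZFC.

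The argument breaks at two unsupported steps. First, cofinality of $U_\epsilon$ in $\delta$ does not make the walk land in $C_\delta\cap U_\epsilon$; that set may be empty. Second, ``coding, for each $\gamma$, a pair in $U_\epsilon$ with colour $\gamma$'' is circular, because $c$ must be fixed before the chain is given.

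What is missing is a property of continuous $\kappa$-chains that arbitrary large sets lack. Examples are that every set of size $<\kappa$, such as a $C_\delta$ of order type $\theta<\kappa$, lies inside a single $U_\epsilon$, and that for $\theta>\kappa$ some $U_\epsilon\cap C_\delta$ is cofinal in $\delta$. You would then need a colouring, defined in advance, that provably extracts all $\lambda$ colours from such configurations. For $\lambda=\chi^+$ you also need an actual argument in which $(C)_2$ is used, not the conjecture about scales in your last paragraph.
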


\begin{notation}
Our notation for group theory will be fairly standard.  If $G$ is a group and $X \subseteq G$, $\langle X \rangle$ is the subgroup of $G$ generated by $X$ and $\langle\langle X \rangle\rangle$ is the smallest normal subgroup in $G$ including the set $X$.  We'll use the convention that $\equiv$ will mean that two words are equivalent, reading letter-by-letter, and write equality $=$ if two expressions represent the same element in the group.
\end{notation}

In the constructions that follow we'll use small cancellation over free products (see \cite[Chapter V]{LS}) and provide now a review of the relevant concepts.  Recall that each element of a free product $H = *_{i \in I} H_i$ of groups is represented uniquely as a \emph{reduced word}, i.e. as a word $W \equiv g_1g_1 \cdots g_n$ where $g_s \in (\bigcup_{i \in I} H_i) \setminus \{1\}$ and $g_s \in H_{i_s}$ implies $g_{s + 1} \notin H_{i_s}$ for each $1 \leq s < n$.  We write $\Len(W) = n$ for the length of this word.

If $W_1 \equiv g_1 \cdots g_n$ and $W_2 \equiv h_1 \cdots h_k$ are reduced words then one obtains the reduced word representative of their product in $H$ by first letting $r$ be maximal such that $g_{k + 1 - t} = h_t^{-1}$ for all $1 \leq t \leq r$.  The reduced word representative is $g_1\cdots g_{n - r - 1} h h_{r + 2} \cdots h_k$ in case $g_{n - r}, h_{r + 1} \in H_i$ and $g_{n - r}h_{r + 1} = h$.  Otherwise the reduced word representative is $g_1 \cdots g_{n - r}h_{r + 1}\cdots h_k$.  If word $W$ is the concatenation $W \equiv W_1W_2$, with $W_1$ and $W_2$ reduced, and the value $r$ used in computing the reduced representative for $W$ is zero, then we say $W$ is \emph{semi-reduced}.

A reduced word $W \equiv g_1 \cdots g_n$ is \emph{cyclically reduced} if either $\Len(W) \leq 1$ or $g_1$ and $g_n$ are in distinct $H_i$.  More generally reduced word $W$ is \emph{weakly cyclically reduced} if either $\Len(W) \leq 1$ or $g_1 \neq g_n^{-1}$.  We'll say a subset $\mathcal{R} \subseteq H$ is \emph{symmetrized} if each element $W \in \mathcal{R}$ is weakly cyclically reduced and every weakly cyclically reduced conjugate of $W$ and of $W^{-1}$ is also in $\mathcal{R}$.  If $\mathcal{R}$ is symmetrized we say a word $U$ is a \emph{piece} if there exist distinct $W_1, W_2 \in \mathcal{R}$ with semi-reduced forms $W_1 = UV_1$ and $W_2 = UV_2$.

\begin{definition}  \label{smallcanc}  A symmetrized set $\mathcal{R} \subseteq H = *_{i \in I} H_i$ \emph{satisfies the $C'(\frac{1}{n})$ condition} if for each $W \in \mathcal{R}$ we have

\begin{enumerate}

\item $\Len(W) > n$; and

\item whenever $W = UV$ is a semi-reduced form for $W$, with $U$ a piece, we have $n\Len(U) < L(W)$.

\end{enumerate}

\end{definition}

The following is an immediate consequence of \cite[Theorem V.9.5]{LS}.

\begin{lemma}\label{LyndonSchupp}  Let $H = *_{i\in I} H_i$ be a free product and $\mathcal{R}$ a symmetrized subset of $H$ which satisfies $C'(\frac{1}{6})$.  Then the natural map $q: H \rightarrow H/\langle\langle \mathcal{R}\rangle\rangle$ is injective on $H_i$ for each $i \in I$, and $q(H_i) \cap q(H_j) = \{1\}$ when $i \neq j$.
\end{lemma}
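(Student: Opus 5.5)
This lemma is derived from \cite[Theorem V.9.5]{LS}, which we take as given. That theorem says the following. Let $\mathcal{R}$ be symmetrized in $H = *_{i\in I} H_i$ and satisfy $C'(\frac{1}{6})$. Let $w \in \langle\langle \mathcal{R}\rangle\rangle$ be nontrivial with reduced form $w$. Then $w$ has a semi-reduced decomposition $w \equiv U S V$ such that some $R \in \mathcal{R}$ has a semi-reduced form $R \equiv S T$ with $\Len(S) > \frac{1}{2}\Len(R)$. In fact $S$ can be taken to exceed $(1 - 3\lambda)\Len(R)$ with $\lambda = \frac16$, so $\Len(S) > \frac12 \Len(R)$. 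This is the free-product version of Greendlinger's lemma. The plan is to apply it to elements of length at most $2$.

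First I show injectivity on each factor. Suppose $h \in H_i \setminus \{1\}$ and $q(h) = 1$. Then $h \in \langle\langle \mathcal{R}\rangle\rangle$ and $h$ is a reduced word of length $1$. By the theorem, $h$ contains a subword $S$ of some $R \in \mathcal{R}$ with $\Len(S) > \frac12\Len(R)$. Since $\Len(S) \le \Len(h) = 1$, this forces $\Len(R) < 2$. But condition (1) of Definition \ref{smallcanc} requires $\Len(R) > 6$, a contradiction. One subtlety: in a free product, $S$ may appear in $w$ only up to merging its end letters with neighbors. In Lyndon–Schupp's statement the decomposition is semi-reduced, so $\Len(S)\le \Len(w)$ still holds, possibly with $S$ being a single letter. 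Either way $\Len(R) \le 2\Len(w) = 2 < 7$.

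Next I show the images of distinct factors intersect trivially. Take $i \neq j$, $h_i \in H_i$ and $h_j \in H_j$ with $q(h_i) = q(h_j) \neq 1$. Then $w = h_i h_j^{-1}$ is a nontrivial reduced word of length $2$ lying in $\langle\langle \mathcal{R}\rangle\rangle$. Note that $h_i, h_j \neq 1$, since by the previous step $q$ is injective on each factor. Applying the theorem again gives some $R\in\mathcal{R}$ with $\frac12\Len(R) < \Len(S) \le 2$, so $\Len(R) < 4$. This contradicts $\Len(R) > 6$.

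The only real obstacle is matching the precise form of \cite[Theorem V.9.5]{LS}, in particular its length bound on the subword $S$ and the semi-reduced conventions for free products. Any version giving $\Len(S) > \frac12\Len(R)$, or even just $\Len(S) > 3$ when $\Len(R) > 6$, suffices, because every word considered here has length at most $2$. If the cited theorem is phrased as ``every nontrivial element of $\langle\langle\mathcal{R}\rangle\rangle$ has reduced length $> \frac12\min\Len(R)$,'' both conclusions follow at once.
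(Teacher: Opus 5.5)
Your proposal is correct and follows the same route as the paper, which simply cites Lyndon--Schupp's small cancellation theory over free products and leaves implicit exactly the deduction you supply. By the free-product Greendlinger lemma, a nontrivial element of $\langle\langle \mathcal{R}\rangle\rangle$ contains, up to consolidation at its ends, more than half of some $R \in \mathcal{R}$; since $\Len(R) \geq 7$, that element has reduced length at least $4$, so neither a nontrivial $h \in H_i$ nor $h_i h_j^{-1}$ with $i \neq j$ can lie in $\langle\langle \mathcal{R}\rangle\rangle$.

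One step you assert without proof is that a semi-reduced product $USV$ has reduced length at least $\Len(S)$. This does hold: each of the at most two consolidations merges an end letter of $S$ with a letter of $U$ or $V$, so it never removes a position of $S$.
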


\begin{lemma}\label{intoquickgeneration}  Suppose $L$ is a group, $(g_{\alpha}: \alpha < \lambda)$ is an enumeration of the elements of $L \setminus \{1\}$, and $f: [\lambda]^2 \rightarrow \lambda$ is a surjective function.  Then $L$ can be embedded in a group $\overline{L_0}$, with $\overline{L_0}$ having four subgroups $H_{p, \ell}$, $p, \ell \in \{1, 2\}$ such that

\begin{enumerate}[(a)]

\item for $p, \ell \in \{1, 2\}$ the group $H_{p, \ell}$ is the direct sum $\bigoplus_{\alpha< \lambda} \langle x_{\alpha, p, \ell}\rangle$ with each $x_{\alpha, p, \ell}$ of order two;

\item $\overline{L_0}$ is generated by the union $H_{1, 1} \cup H_{1, 2} \cup H_{2, 1} \cup H_{2, 2}$;

\item $g_{f\{\alpha, \beta\}} \in \langle \{x_{\alpha, 1, 1}, x_{\alpha, 2, 1}, x_{\beta, 1, 2}x_{\beta, 2,  2}\} \rangle$;

\item $H_{p, \ell} \cap H_{p', \ell'} = \{1\}$ when $(p, \ell) \neq (p', \ell')$.

\end{enumerate}

\end{lemma}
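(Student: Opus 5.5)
The plan is to realize $\overline{L_0}$ as a small cancellation quotient of a free product, and to read off (a)--(d) from Lemma \ref{LyndonSchupp}. Let $H_{p,\ell} = \bigoplus_{\alpha<\lambda}\langle x_{\alpha,p,\ell}\rangle$ be abstract elementary abelian $2$-groups, and form
$$H = L * H_{1,1} * H_{1,2} * H_{2,1} * H_{2,2}.$$
For each $\{\alpha,\beta\}\in[\lambda]^2$ with $\alpha<\beta$, write $y_\beta = x_{\beta,1,2}x_{\beta,2,2}$. This is a reduced word of length two, with letters from different factors. We then impose one relator $r_{\alpha,\beta}\equiv g_{f\{\alpha,\beta\}}^{-1}W_{\alpha,\beta}$, where $W_{\alpha,\beta}$ is a long word in the three letters $x_{\alpha,1,1}$, $x_{\alpha,2,1}$, $y_\beta$. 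Let $\mathcal{R}$ be the symmetrized closure of $\{r_{\alpha,\beta}\}$, and set $\overline{L_0} = H/\langle\langle\mathcal{R}\rangle\rangle$.

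\textbf{Choice of $W_{\alpha,\beta}$.} Take, for some fixed $m$ (say $m=20$),
$$W_{\alpha,\beta} \equiv \prod_{k=1}^{m} \bigl(x_{\alpha,1,1}\,y_\beta\bigr)^{k}\,\bigl(x_{\alpha,2,1}\,y_\beta\bigr)^{k}.$$
Consecutive letters always lie in different free factors, so $r_{\alpha,\beta}$ is reduced and cyclically reduced. Its only letter from $L$ is $g_{f\{\alpha,\beta\}}$, which is nontrivial by the choice of enumeration, and its length is of order $m^2$. The same pattern is used for every pair, with only the subscripts changing.

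\textbf{Checking $C'(\tfrac16)$.} This is the main obstacle. Let $U$ be a common initial segment of two distinct elements of $\mathcal{R}$, i.e.\ of cyclic conjugates of some $r_{\alpha,\beta}^{\pm1}$ and $r_{\alpha',\beta'}^{\pm1}$.

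First, any letter $x_{\alpha,p,1}$ or $x_{\beta,p,2}$ occurring in $U$ determines $\alpha$ or $\beta$. Moreover, within a short stretch $U$ contains letters of both kinds, so a piece longer than a few letters forces $\{\alpha,\beta\}=\{\alpha',\beta'\}$. Here the factor labels $(p,1)$ versus $(p,2)$ keep the roles of $\alpha$ and $\beta$ from being confused.

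Second, once the pair is fixed, the two words are cyclic conjugates of the same relator or of its inverse. The block lengths $k=1,\dots,m$ are pairwise distinct, and the letters $x_{\alpha,1,1}$ and $x_{\alpha,2,1}$ lie in different factors. Hence any subword containing two consecutive block boundaries determines its position in the cyclic word, and also its orientation, since reading backwards turns $x\,y_\beta$ into $x_{\beta,2,2}x_{\beta,1,2}x$. So a piece lies within roughly two consecutive blocks, giving length $O(m)$ against total length of order $m^2$, and $6\Len(U)<\Len(r)$ for $m$ large. The pieces that pass through the letter $g$ are handled by the same argument.

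I expect the careful bookkeeping of these cyclic and inverse overlaps to be the fiddly part. If the block-counting gets messy, I would instead use blocks of the form $(x_{\alpha,1,1}y_\beta)^{k}x_{\alpha,2,1}y_\beta$ with distinct $k$, which makes the occurrences of $x_{\alpha,2,1}$ serve as markers.

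\textbf{Conclusion.} By Lemma \ref{LyndonSchupp}, the quotient map is injective on $L$ and on each $H_{p,\ell}$, which gives the embedding of $L$ and (a). The images of distinct factors meet trivially, which gives (d). In $\overline{L_0}$ we have $g_{f\{\alpha,\beta\}} = W_{\alpha,\beta}$, which lies in $\langle x_{\alpha,1,1},x_{\alpha,2,1},x_{\beta,1,2}x_{\beta,2,2}\rangle$, so (c) holds. Finally, $f$ is surjective, so every $g_\gamma$ equals some $W_{\alpha,\beta}$. Thus $L$ lies in the subgroup generated by the four $H_{p,\ell}$, and since $H$ is generated by $L$ together with the $H_{p,\ell}$, this gives (b).
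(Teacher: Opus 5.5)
Your proposal is correct and follows the paper's own strategy: take a $C'(\frac16)$ quotient of a free product by one relator $g_{f\{\alpha,\beta\}}^{-1}W_{\alpha,\beta}$ per pair, then read off (a)--(d) from Lemma~\ref{LyndonSchupp} and the surjectivity of $f$. The only difference is technical. The paper works in $L*H_1*H_2$ with $H_\ell=H_{1,\ell}*H_{2,\ell}$, so the commutator powers $c_{\alpha,1}^k$ are single syllables and a length-$25$ relator already has pieces of length at most $4$; your five-factor version needs the long relator and run-length bookkeeping instead, which does go through (pieces have length roughly at most $6m$, against $\Len(r_{\alpha,\beta})=3m(m+1)+1$, so $m=20$ suffices).
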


\begin{proof}
Assume the hypotheses.  In accordance with condition (a) take $H_{p, \ell}$ to be the direct sum of cyclic groups $\langle x_{\alpha, p, \ell}\rangle$ of order two.  Let $H_{\ell}$ be the free product $H_{1, \ell} * H_{2, \ell}$, for $\ell  = 1, 2$.  Consider the free product $H = L * H_1 * H_2$.  Let $c_{\alpha, \ell}$ equal the commutator $[x_{\alpha, 1, \ell}, x_{\alpha, 2, \ell}] \in H_{\ell}$.  For each $\alpha < \beta < \lambda$ take $W_{\alpha, \beta}$ to be the word

$$g_{f\{\alpha, \beta\}}^{-1} c_{\alpha, 1}^{12}c_{\beta, 2}c_{\alpha, 1}^{11}c_{\beta, 2} \cdots c_{\alpha, 1}^{2}c_{\beta, 2}c_{\alpha, 1}c_{\beta, 2}$$

\noindent Let $\mathcal{R}$ be the symmetrized closure of the set $\{W_{\alpha, \beta}\}_{\alpha < \beta < \lambda}$ (i.e. the set of all weakly cyclically reduced conjugates of elements in $\{W_{\alpha, \beta}\}_{\alpha < \beta < \lambda}$ and their inverses).  It is easy to see in each case that $\Len(W_{\alpha, \beta}) = 25$ in the free product $H$ and $W_{\alpha, \beta}$ is cyclically reduced, and so each element of $\mathcal{R}$ has length $25$ or $26$.

For any $W \in \mathcal{R}$ the initial subword of length $3$ uniquely elucidates $\alpha < \beta < \lambda$ and $q \in \{\pm 1\}$ for which $W$ is conjugate to $W_{\alpha, \beta}^q$.  Also, all weakly cyclically reduced conjugates of $W_{\alpha, \beta}^q$ are uniquely determined by their initial subword of length $3$, so a piece $U$ in $\mathcal{R}$ can have length at most $4$.  Thus $6 \cdot \Len(U) < 25 \leq \Len(W)$ for any piece $U$ of any $W \in \mathcal{R}$, and $\Len(W) > 6$.  Therefore $\mathcal{R}$ satisfies the $C'(\frac{1}{6})$ condition and by Lemma \ref{LyndonSchupp} the quotient $\overline{L_0} = H/\langle\langle\mathcal{R}\rangle\rangle$ embeds each of $L$, $H_1$, and $H_2$ with trivial pairwise intersection.  Thus condition (d) holds, condition (c) is clear by the relators $W_{\alpha, \beta}$, and condition (b) holds since $f$ is onto.
\end{proof}

\begin{lemma}\label{morecontrol}  Suppose $L$ is a group, $\lambda = 2^{\mu}$, $(g_{\alpha}: \alpha < \lambda)$ is an enumeration of $L \setminus \{1\}$ and $f: [\lambda]^2 \rightarrow \lambda$ is surjective.  Then $L$ can be embedded in a group $\overline{L_{00}}$ having four subgroups $H_{p, \ell}$, $p, \ell \in \{1, 2\}$ and a subset $\{z_{\gamma}: \gamma < \mu\} \subseteq \overline{L_{00}}$ such that

\begin{enumerate}[(a)]

\item for $p, \ell \in \{1, 2\}$ the group $H_{p, \ell}$ is the direct sum $\bigoplus_{\alpha< \lambda} \langle x_{\alpha, p, \ell}\rangle$ with $x_{\alpha, p, \ell}$ of order two;

\item $\overline{L_{00}}$ is generated by the union $H_{1, 1} \cup H_{1, 2} \cup H_{2, 1} \cup H_{2, 2} \cup \{z_{\gamma}: \gamma < \mu\}$;

\item $g_{f\{\alpha, \beta\}} \in \langle \{x_{\alpha, 1, 1}, x_{\alpha, 2, 1}, x_{\beta, 1, 2}x_{\beta, 2,  2}\} \rangle$;

\item $H_{p, \ell} \cap H_{p', \ell'} = \{1\}$ when $(p, \ell) \neq (p', \ell')$;

\item if $p, \ell \in \{0, 1\}$ and $\alpha_1, \ldots, \alpha_m, \hat{\alpha_1}, \ldots,  \hat{\alpha_n}$ are distinct elements of $\lambda$, then there exist $\gamma, \hat{\gamma} < \mu$ such that $z_{\gamma}$ commutes with $x_{\alpha_r, p, \ell}$ for $1 \leq r \leq m$ but not with any $x_{\hat{\alpha_s}, p, \ell}$ for $1 \leq s \leq n$ and $z_{\hat{\gamma}}$ commutes with $x_{\hat{\alpha_s}, p, \ell}$ for $1 \leq s \leq n$ but does not commute with $x_{\alpha_r, p, \ell}$ for $1 \leq r \leq m$.

\end{enumerate}
\end{lemma}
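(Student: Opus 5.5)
Start from the group $\overline{L_0}$ of Lemma \ref{intoquickgeneration}, which already gives (a)--(d), and adjoin the elements $z_\gamma$ by a semidirect product or HNN-type construction that fixes each $H_{p,\ell}$. Since $\lambda = 2^\mu$, Lemma \ref{independentset} (applied to $\mu$) provides an independent family $\mathcal{J} = \{J_\alpha : \alpha < \lambda\}$ of subsets of $\mu$, indexed injectively. The plan is that $z_\gamma$ centralizes $x_{\alpha,p,\ell}$ exactly when $\gamma \in J_\alpha$.

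The first step is to define, for each $\gamma<\mu$ and each $(p,\ell)$, an automorphism $\phi_{\gamma}$ of $H_{p,\ell}$. Here I would use extra structure: replace each $\langle x_{\alpha,p,\ell}\rangle$ coordinate by something admitting a nontrivial automorphism that moves $x_{\alpha,p,\ell}$. The cleanest option is to enlarge to $\overline{L_0} * \langle z_\gamma : \gamma<\mu\rangle$ (free) and then impose only the relations $[z_\gamma, x_{\alpha,p,\ell}]=1$ for $\gamma\in J_\alpha$, for every $(p,\ell)$. The resulting group is a graph-of-groups / tree-product type amalgam. Concretely, let $A_\gamma = \langle x_{\alpha,p,\ell} : \gamma \in J_\alpha, p,\ell\rangle \le \overline{L_0}$, and form the iterated HNN extension of $\overline{L_0}$ with stable letters $z_\gamma$ acting trivially on $A_\gamma$. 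Call this group $\overline{L_{00}}$.

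Then $\overline{L_0}$ embeds by Britton's lemma, so $L$ embeds and (a), (c), (d) persist. Property (b) is immediate, since $\overline{L_0}$ was generated by the $H_{p,\ell}$. For (e), given distinct $\alpha_r$ and $\hat\alpha_s$, independence yields $\gamma \in \bigcap_r J_{\alpha_r} \setminus \bigcup_s J_{\hat\alpha_s}$, and symmetrically $\hat\gamma$. Then $z_\gamma$ commutes with each $x_{\alpha_r,p,\ell}$ by the relations. It remains to see that $z_\gamma$ does \emph{not} commute with $x_{\hat\alpha_s,p,\ell}$. By Britton's lemma, $z_\gamma x z_\gamma^{-1} x^{-1}$ is nontrivial whenever $x\notin A_\gamma$, so we need $x_{\hat\alpha_s,p,\ell}\notin A_\gamma$.

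This membership question is the main obstacle: the subgroup of $\overline{L_0}$ generated by chosen $x$'s from all four $H_{p,\ell}$ is not transparently controlled. To avoid it, I would instead do one HNN extension per $(p,\ell)$ and $\gamma$. Take $A_{\gamma,p,\ell} = \langle x_{\alpha,p,\ell} : \gamma\in J_\alpha\rangle \le H_{p,\ell}$ and use separate stable letters. Since $H_{p,\ell}$ embeds as an elementary abelian $2$-group with basis $\{x_{\alpha,p,\ell}\}$, membership in $A_{\gamma,p,\ell}$ is decided by the basis: $x_{\hat\alpha,p,\ell}\notin A_{\gamma,p,\ell}$ when $\gamma\notin J_{\hat\alpha}$.

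Separate letters, however, create $4\mu$ elements. Reindex them as $\mu$ elements $z_\gamma$, one per pair $(\gamma,(p,\ell))$ in $\mu\times 4$. Condition (e) for a fixed $(p,\ell)$ then uses the letters attached to that $(p,\ell)$, with $\gamma$ chosen as above. With these choices (a)--(e) all hold, and Britton's lemma for the multiple HNN extension (stable letters free, associated subgroups fixed pointwise) provides both the embedding and the noncommutation.
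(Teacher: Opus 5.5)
Your final construction is correct and is essentially the paper's proof. It uses an independent family of $2^{\mu}$ subsets of $\mu$ indexed by $\lambda$, and one HNN stable letter for each $(\gamma, p, \ell) \in \mu \times \{1,2\} \times \{1,2\}$ centralizing $\langle x_{\alpha, p, \ell} : \gamma \in J_{\alpha}\rangle \leq H_{p, \ell}$. Britton's lemma, together with the basis of the elementary abelian group $H_{p,\ell}$ to decide membership, gives the non-commutation in (e), and $\mu \times 4$ is reindexed by $\mu$.

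In a written proof, drop the exploratory first attempts you rightly abandoned: the automorphism idea and the single letter $z_\gamma$ centralizing generators from all four $H_{p,\ell}$ at once.
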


\begin{proof}
Take $\overline{L_0}$ and $H_{p, \ell}$ given by Lemma \ref{intoquickgeneration}.  By Lemma \ref{independentset} let $\mathcal{J}$ be an independent collection of subsets of $\mu$ with $|\mathcal{J}| = 2^{\mu}$.  Let $b: \mathcal{J} \rightarrow \lambda$ to be a bijection.  For each $\gamma < \mu$ let $Y_{\gamma} = \{\alpha < \lambda: \gamma \in b^{-1}(\alpha)\}$ and  let $\overline{L_{00}}$ be the HNN extension over $\overline{L_0}$ which adds a stable letter $t_{\gamma, p, \ell}$ for each $(\gamma, p, \ell) \in \mu \times \{1, 2\} \times \{1, 2\}$ so that $$t_{\gamma, p, \ell}^{-1}gt_{\gamma, p, \ell} = g$$ for $g\in \langle \{x_{\alpha, p, \ell}\}_{\alpha \in Y_{\gamma}}\rangle$ for each $\gamma, p, \ell$ .

By \cite[Theorem IV.2.1]{LS} it is clear that in fact $t_{\gamma, p, \ell}$ commutes with an element $g$ of $\overline{L_0}$ if and only if $g \in \langle \{x_{\alpha, p, \ell}\}_{\alpha \in Y_{\gamma}} \rangle$.  Taking a bijection $b': \mu \rightarrow \mu \times \{1, 2\} \times \{1, 2\}$ we let $z_{\gamma} = t_{b'(\gamma)}$.  Conditions (a) - (d) evidently hold.

Suppose $p, \ell \in \{1, 2\}$ and $\alpha_1, \ldots, \alpha_m, \hat{\alpha_1}, \ldots, \hat{\alpha_n}$ are distinct elements of $\lambda$ with $n \geq 2$.  Pick $\gamma' \in \bigcap_{r = 1}^m b^{-1}(\alpha_r) \setminus (\bigcup_{s = 1}^n b^{-1}(\hat{\alpha_s}))$.  Notice that $t_{\gamma', p, \ell}$ commutes with $x_{\alpha_r, p, \ell}$ as $\gamma' \in Y_{\alpha_r}$, but $t_{\gamma', p_1, \ell_1}$ does not commute with $x_{\hat{\alpha_s}, p, \ell}$ for any $1\leq s \leq n$ since $\gamma' \notin Y_{\hat{\alpha_s}}$.  Selecting $\gamma'' \in \bigcap_{s = 1}^n b^{-1}(\hat{\alpha_s}) \setminus (\bigcup_{r = 1}^m b^{-1}(\alpha_r))$ we similarly see that $t_{\gamma'', p, \ell}$ commutes with each $x_{\hat{\alpha_s}, p, \ell}$, $1 \leq s \leq n$, but does not commute with $x_{\alpha_r, p, \ell}$ for $1 \leq r \leq m$.  Now $\gamma = (b')^{-1}(\gamma', p, \ell)$ and $\hat{\gamma} = (b')^{-1}(\gamma'', p, \ell)$ satisfy the conclusion of (e).

\end{proof}

We will also make use of the following.

\begin{lemma}\label{intoaclosedgroup} \cite[p. 227]{KayaKegelKuzucuoglu}  Suppose $\aleph_1 \leq \kappa$, $\lambda = \lambda^{< \kappa}$ and $L$ is a group of cardinality at most $\lambda$.  Then there exists a $\kappa$-existentially closed group $\overline{L}$ with $L \leq \overline{L}$ and $|\overline{L}| = \lambda$.
\end{lemma}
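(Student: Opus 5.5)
The statement just above is Lemma~\ref{intoaclosedgroup}, quoted from \cite{KayaKegelKuzucuoglu}; here is how I would prove it directly. The plan is a transfinite chain construction of length $\kappa$, closing off under solutions of small systems at each stage. First I may assume $|L| = \lambda$, replacing $L$ by $L * F$ with $F$ free of rank $\lambda$. Since $\kappa$ is regular, I will build a continuous $\subseteq$-increasing chain $(L_\epsilon : \epsilon \le \kappa)$ of groups with $L_0 = L$, $|L_\epsilon| = \lambda$ for every $\epsilon$, and put $\overline{L} = L_\kappa = \bigcup_{\epsilon<\kappa} L_\epsilon$. Continuity at limits $\epsilon < \kappa$ keeps the size at $\lambda$, since a union of fewer than $\kappa \le \lambda$ sets of size $\lambda$ has size $\lambda$.

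For the successor step $L_\epsilon \to L_{\epsilon+1}$, I list all systems $\Sigma$ of equations and inequations with coefficients in $L_\epsilon$ such that $|\Sigma| < \kappa$. A system of size $\theta < \kappa$ involves fewer than $\kappa$ variables and coefficients, so the number of such systems is at most $\lambda^{<\kappa} = \lambda$. Let $(\Sigma_i : i < \lambda)$ be those systems that have a solution in some overgroup of $L_\epsilon$. For each $i$ I pick an overgroup $K_i \ge L_\epsilon$ containing a solution, and shrink $K_i$ to the subgroup generated by $L_\epsilon$ together with the fewer than $\kappa$ solution entries; then $|K_i| = \lambda$. I then let $L_{\epsilon+1}$ be the free product of all the $K_i$ amalgamated over the common subgroup $L_\epsilon$. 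Each $K_i$ embeds in this amalgam, and equations and inequations are preserved under embeddings, so each $\Sigma_i$ acquires a solution in $L_{\epsilon+1}$. The amalgam has size $\lambda \cdot \lambda = \lambda$.

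It remains to check that $\overline{L}$ is $\kappa$-existentially closed. Take a system $\Sigma$ over $\overline{L}$ with $|\Sigma| < \kappa$ that is solvable in some $H \ge \overline{L}$. Its coefficient set has size less than $\kappa$, so by regularity of $\kappa$ all coefficients lie in a single $L_\epsilon$ with $\epsilon < \kappa$. Since $H \ge L_\epsilon$, the system was solvable over $L_\epsilon$, so it appears as some $\Sigma_i$ and has a solution in $L_{\epsilon+1} \subseteq \overline{L}$. Finally $|\overline{L}| = \kappa \cdot \lambda = \lambda$.

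I expect the main obstacle to be the counting step, namely bounding the number of systems by $\lambda^{<\kappa} = \lambda$. One subtlety is that the overgroups $K_i$ have to be chosen as sets, which is handled by the size-$\lambda$ restriction on the generated subgroups. The other subtlety is the fact, used in the successor step, that every $K_i$ embeds into the amalgamated free product over $L_\epsilon$; this is standard.
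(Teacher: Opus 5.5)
The paper does not prove this lemma; it only cites \cite{KayaKegelKuzucuoglu}. Your Scott-style chain construction therefore has to stand on its own, and for \emph{regular} $\kappa$ it does. The following steps are all sound:
\begin{itemize}
\item counting the systems over $L_\epsilon$ via $\lambda^{<\kappa}=\lambda$;
\item shrinking each witness $K_i$ to size $\lambda$;
\item amalgamating the $K_i$ over $L_\epsilon$;
\item the final closure argument.
\end{itemize}
The inequality $\kappa\le\lambda$, which you use without comment, follows from $\lambda\ge\lambda^{\theta}\ge 2^{\theta}>\theta$ for every $\theta<\kappa$.

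The gap is the sentence ``Since $\kappa$ is regular.'' Regularity is not a hypothesis of the lemma as stated. The paper also needs the singular case, because Corollary~\ref{biconditional} is stated for every uncountable $\kappa$ and its direction (B)$\Rightarrow$(A) is exactly this lemma.

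For singular $\kappa$ your closure step genuinely fails. A system of size $\theta$ with $\cf(\kappa)\le\theta<\kappa$ can have its coefficients cofinal in the chain $(L_\epsilon:\epsilon<\kappa)$. Such a system lies over no single $L_\epsilon$, so it is never handled at any successor stage.

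The repair is a short cardinal computation. Suppose $\kappa$ is singular and write $\kappa=\sum_{i<\cf(\kappa)}\kappa_i$ with each $\kappa_i<\kappa$. Then $\lambda^{\kappa}=\prod_{i<\cf(\kappa)}\lambda^{\kappa_i}=\lambda^{\cf(\kappa)}=\lambda$, so $\lambda^{<\kappa^+}=\lambda$. Also $\kappa^+\le\lambda$, since $\lambda=\lambda^{\kappa}>\kappa$.

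Now run your construction with the regular cardinal $\kappa^+$ in place of $\kappa$, using a chain of length $\kappa^+$ and systems of size at most $\kappa$, of which there are $\lambda^{\kappa}=\lambda$. This produces a $\kappa^+$-existentially closed group of size $\lambda$ containing $L$. Such a group is a fortiori $\kappa$-existentially closed. Alternatively, simply state the reduction to regular $\kappa$ at the outset.
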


\end{section}

\begin{section}{Proof of Theorem \ref{powercardinal}} \label{Theproof}

We are now ready to prove Theorem \ref{powercardinal}, and so assume its hypotheses.  In case $\lambda$ is the successor of a singular cardinal $\chi$, select $\overline{\kappa} \in \{\kappa, \kappa^+\} \setminus \{\cf(\chi)\}$, otherwise let $\overline{\kappa} = \kappa$.  By our choice of $\overline{\kappa}$ choose by Lemma \ref{beautifulcoloringexhaustion} a function $c: [\lambda]^2 \rightarrow \lambda$ such that if $(U_i: i < \overline{\kappa})$ is a continuous $\subseteq$-increasing sequence whose union is $\lambda$ there exists $i < \overline{\kappa}$ for which $c''[U_i]^2 = \lambda$.   Let $(S_{\epsilon}: \epsilon <\overline{\kappa})$ be a collection of pairwise disjoint stationary subsets of $\overline{\kappa}$ \cite[Theorem 8.10]{J}, where without loss of generality $\min (S_{\epsilon}) > \epsilon$.

We inductively construct a continuous $\leq$-increasing sequence of groups $(G_{\epsilon}: \epsilon \leq \overline{\kappa})$ having $|G_{\epsilon}| = \lambda$ for each $\epsilon$.  As the sequence is of length $\overline{\kappa} < \lambda$ it will be clear for limit ordinal $\epsilon$ that $|G_{\epsilon}|$ has cardinality $\lambda$ provided this holds at the earlier stages.  Take $G_0$ to be an arbitrary group of cardinality $\lambda$.  Suppose that we have defined $G_{\zeta}$ and $\zeta < \overline{\kappa}$.

\begin{itemize}

\item[$(*)_1$]  If $\zeta = 0 \modvar 3$ then by Lemma \ref{intoaclosedgroup} let $G_{\zeta + 1} \geq G_{\zeta}$ be a $\kappa$-existentially closed group of cardinality $\lambda$.

\item[$(*)_2$]  If $\zeta = 1 \modvar 3$ then let $(g_{\zeta, \alpha}: \alpha < \lambda)$ list the elements of $G_{\zeta} \setminus \{1\}$.  By Lemma \ref{morecontrol} take $G_{\zeta + 1} \geq G_{\zeta}$ to have subsets $\{x_{\zeta + 1, \alpha, p, \ell}: \alpha < \lambda, p, \ell \in \{1, 2\}\}$ and $\{z_{\zeta + 1, \gamma}: \gamma < \mu\}$ such that

\begin{enumerate}[(i)]

\item $G_{\zeta + 1}$ is generated by the union $$\{x_{\zeta + 1, \alpha, p, \ell}: \alpha < \lambda, p, \ell \in \{1, 2\}\} \cup \{z_{\zeta + 1, \gamma}: \gamma < \mu\};$$

\item $g_{\zeta, c\{\alpha, \beta\}} \in \langle \{x_{\zeta + 1, \alpha, 1, 1}, x_{\zeta + 1, \alpha, 2, 1}, x_{\zeta + 1, \beta, 1, 2}, x_{\zeta + 1, \beta, 2, 2}\} \rangle$;

\item if $p, \ell \in \{0, 1\}$ and $\alpha_1, \ldots, \alpha_m, \hat{\alpha_1}, \ldots,  \hat{\alpha_n}$ are distinct elements of $\lambda$, then there exist $\gamma, \hat{\gamma} < \mu$ such that $z_{\zeta + 1, \gamma}$ commutes with $x_{\zeta + 1, \alpha_r, p, \ell}$ for $1 \leq r \leq m$ but not with any $x_{\zeta + 1, \hat{\alpha_s}, p, \ell}$ for $1 \leq s \leq n$ and $z_{\hat{\gamma}}$ commutes with $x_{\zeta + 1, \hat{\alpha_s}, p, \ell}$ for $1 \leq s \leq n$ but does not commute with $x_{\zeta + 1, \alpha_r, p, \ell}$ for $1 \leq r \leq m$; and

\item if $p, \ell \in \{1, 2\}$ the subgroup $H_{\zeta + 1, p, \ell} = \langle \{x_{\zeta + 1, \alpha, p, \ell}: \alpha < \lambda\} \rangle$ is the direct sum of the cyclic subgroups of the listed elements, each having order two.

\end{enumerate}

\item[$(*)_3$] If $\zeta = 2 \modvar 3$, $\zeta =  \delta + 2$ with $\delta \in S_{\epsilon_1}$ and $\epsilon_1 = 2 \modvar 3$ then take $G_{\zeta + 1} \geq G_{\zeta}$ with $|G_{\zeta + 1}| = \lambda$ to have elements $y_{\zeta + 1, p, \ell}$ such that $g \in G_{\delta}$ commutes with $y_{\zeta + 1, p, \ell}$ if and only if $g \in H_{\epsilon_1, p, \ell}$.  To obtain this, one can let $\iota_{p, \ell}$ be the identity homomorphism on $H_{\epsilon_1, p, \ell}$ and take $G_{\zeta + 1}$ to be the HNN extension of $G_{\zeta}$ over the maps $\iota_{p, \ell}$ with $y_{\zeta + 1, p, \ell}$ as the stable letters.  In case $\zeta - 2$ is not in such a specified stationary set we let $G_{\zeta + 1} \geq G_{\zeta}$ be $\kappa$-existentially closed of cardinality $\lambda$.
\end{itemize}

We check that the group $G = \bigcup_{\epsilon < \overline{\kappa}} G_{\epsilon}$ is as required.

\begin{claim}\label{club}  If $\pi \in \Aut(G)$ the set $C_{\pi} = \{\delta < \overline{\kappa} : \pi \upharpoonright G_{\delta} \in \Aut(G_{\delta})\}$ is club in $\overline{\kappa}$.
\end{claim}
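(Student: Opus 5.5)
The plan is the standard closing-off argument, made to work by three facts: the chain $(G_\epsilon:\epsilon\le\overline{\kappa})$ is continuous, has regular length $\overline{\kappa}$, and each $G_\epsilon$ has cardinality $\lambda$ while $G$ also has cardinality $\lambda$. The last fact is a problem: a single $G_\delta$ need not be small relative to $G$, so we cannot close off by counting elements. Instead we close off using generators. Each $G_{\zeta+1}$ is obtained from $G_\zeta$ by a construction with a prescribed form, and we would like $\pi''G_\delta\subseteq G_{\delta'}$ for some $\delta'<\overline{\kappa}$. This does not follow by cardinality. We handle it with the coloring $c$ and Lemma \ref{beautifulcoloringexhaustion}, or more simply by the following trick.

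\textbf{Closure.} First show that $C_\pi$ is closed. Let $\delta$ be a limit with $\delta_i\in C_\pi$ cofinal in $\delta$. Then $G_\delta=\bigcup G_{\delta_i}$ by continuity. Since $\pi$ maps each $G_{\delta_i}$ onto itself, it maps the union onto itself, so $\delta\in C_\pi$.

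\textbf{Unboundedness.} This is the main obstacle. For $\epsilon<\overline{\kappa}$ we must find $\epsilon'\ge\epsilon$ with $\pi''G_\epsilon\subseteq G_{\epsilon'}$; symmetrically we need the same for $\pi^{-1}$. Suppose no such $\epsilon'$ exists. Then $U_i=\{\alpha<\lambda: \pi(g_{\epsilon,\alpha})\in G_i\}$ is an increasing continuous sequence with union $\lambda$, indexed by $i<\overline{\kappa}$ (after reindexing, putting the empty set first). By the choice of $c$, some $i$ satisfies $c''[U_i]^2=\lambda$. Now use the structure of stage $(*)_2$. Take $\zeta=\epsilon$ and assume $\epsilon\equiv 1 \pmod 3$ after enlarging $\epsilon$. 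Each $g_{\zeta,c\{\alpha,\beta\}}$ lies in the subgroup generated by $x_{\zeta+1,\alpha,\cdot,1}$ and $x_{\zeta+1,\beta,\cdot,2}$. We need to apply the coloring to the $x$'s, not the $g$'s, so define instead $U_i=\{\alpha: \pi(x_{\zeta+1,\alpha,p,\ell})\in G_i \text{ for all } p,\ell\}$. Then every $g_{\zeta,c\{\alpha,\beta\}}$ with $\alpha,\beta\in U_i$ has $\pi$-image in $G_i$. Surjectivity of $c$ on $[U_i]^2$ then gives $\pi''G_\zeta\subseteq G_i$. If $\bigcup_iU_i=\lambda$, this closes off $G_\zeta$.

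So the obstacle reduces to controlling $\pi$ on the $\lambda$ many $x$'s. Take $i$ large enough that $\pi(z_{\zeta+1,\gamma})\in G_i$ for all $\gamma<\mu$. This is possible because $\mu<\kappa\le\overline{\kappa}$ is false in general, since $\kappa<\mu$. Instead, reapply the same coloring argument with $G_{\zeta+1}$ in place of $G_\zeta$, as a generated-by-$x$'s union. Either way we obtain a function $h:\overline{\kappa}\to\overline{\kappa}$ with $\pi''G_\epsilon\cup\pi^{-1}{}''G_\epsilon\subseteq G_{h(\epsilon)}$. Iterating $h$ $\omega$ times from any $\epsilon$ gives a limit $\delta$ (of cofinality $\omega$, which is below $\overline{\kappa}$ since $\overline{\kappa}$ is regular uncountable) with $\pi''G_\delta=G_\delta$. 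Hence $C_\pi$ is unbounded.

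The key difficulty is justifying the existence of $h(\epsilon)$, i.e.\ the coloring step. The plan is to apply Lemma \ref{beautifulcoloringexhaustion} to $U_i=\{\alpha:\pi(x_{\zeta+1,\alpha,p,\ell}),\pi^{-1}(x_{\zeta+1,\alpha,p,\ell})\in G_i\}$. Because $G=\bigcup_i G_i$, these sets are increasing with union $\lambda$, and after taking the continuous closure they remain of this form. We then conclude via property (ii).
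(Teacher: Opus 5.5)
Your final argument is correct and is essentially the paper's proof. You define $U_i$ from the images of the four $x_{\zeta+1,\alpha,p,\ell}$, get $c''[U_i]^2=\lambda$ from the coloring, bound $\pi''G_\zeta$ via $(*)_2$(ii), and close off along an $\omega$-sequence. Your inclusion of $\pi^{-1}$ is actually needed to get $\pi''G_\delta=G_\delta$ rather than just $\subseteq$, a point the paper glosses over.

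The detour about bounding the images of the $z_{\zeta+1,\gamma}$ (and re-running the argument on $G_{\zeta+1}$) is unnecessary and should be cut. $\bigcup_i U_i=\lambda$ is automatic from $G=\bigcup_i G_i$, and $(U_i)$ is already continuous because membership depends on only finitely many elements.
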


\begin{proof}
It is clear that the set $C_{\pi}$ is closed.  Note that if $\zeta < \overline{\kappa}$ there is an $\zeta < \eta < \overline{\kappa}$ such that $\pi(G_{\zeta}) \subseteq G_{\eta}$.  To see this, select $\zeta_1 > \zeta$ with $\zeta_1 = 1 \modvar 3$.  For $i < \overline{\kappa}$ let $U_i = \{\alpha < \lambda: \pi(x_{\zeta_1, \alpha, p, \ell}) \in G_i, p, \ell \in \{1, 2\}\}$.  Now $(U_i: i < \overline{\kappa})$ is a $\subseteq$-increasing continuous sequence whose union is $\lambda$ and therefore $c''[U_{\eta}]^2 = \lambda$ for some $\eta < \overline{\kappa}$.  By $(*)_2$ condition (ii) we know $\pi(G_{\zeta}) \subseteq \pi(G_{\zeta_1 - 1}) \subseteq G_{\eta}$.  Now to see that $C_{\pi}$ is unbounded, let $\alpha_0 < \overline{\kappa}$ be given and generally pick $\alpha_{n + 1} > \alpha_n$ such that $\pi(G_{\alpha_n}) \subseteq G_{\alpha_{n + 1}}$.  Evidently  $\sup\{\alpha_n : n \in \omega\} \in C_{\pi}$.
\end{proof}

\begin{claim}\label{agreement}  If $\pi_1, \pi_2 \in \Aut(G)$ agree on the set $$\{y_{\delta + 3, p, \ell}: \delta < \overline{\kappa}, \delta = 0\modvar 3, p, \ell \in \{1, 2\}\} \cup \{z_{\delta + 2, \gamma}: \delta < \overline{\kappa}, \delta = 0 \modvar 3, \gamma < \mu\}$$ then $\pi_1 = \pi_2$.

\end{claim}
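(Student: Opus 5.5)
We argue with $\pi = \pi_2^{-1}\pi_1 \in \Aut(G)$, which fixes every listed $y$ and $z$, and aim to show $\pi$ is the identity. By $(*)_2$(i), each $G_{\epsilon}$ with $\epsilon = \zeta + 1$, $\zeta = 1 \modvar 3$, is generated by the elements $x_{\epsilon, \alpha, p, \ell}$ and $z_{\epsilon, \gamma}$, and these $\epsilon$ are cofinal in $\overline{\kappa}$. So it suffices to show that $\pi$ fixes each $x_{\epsilon, \alpha, p, \ell}$ for such $\epsilon$; the $z_{\epsilon,\gamma}$ are fixed by hypothesis. I will not try to make the indexing in the statement match the construction exactly. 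The intended correspondence is that the $y$ added at a stage $\delta + 3$, with $\delta \in S_{\epsilon}$, is the one from $(*)_3$ attached to the group $H_{\epsilon, p, \ell}$.

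\emph{Step 1: $\pi$ preserves $H_{\epsilon, p, \ell}$.} Fix such an $\epsilon$ and $p, \ell$. By Claim \ref{club}, $C_\pi$ is club in $\overline{\kappa}$. Since $S_{\epsilon}$ is stationary, there is $\delta \in C_\pi \cap S_{\epsilon}$, and $\delta > \epsilon$ because $\min S_\epsilon > \epsilon$. Hence $\pi \upharpoonright G_\delta \in \Aut(G_\delta)$. By $(*)_3$, the centralizer in $G_\delta$ of the corresponding $y = y_{\delta+3,p,\ell}$ is exactly $H_{\epsilon,p,\ell}$, and $H_{\epsilon,p,\ell} \le G_{\epsilon} \le G_\delta$. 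Since $\pi$ fixes $y$ and maps $G_\delta$ onto itself, it maps $C_{G_\delta}(y) = H_{\epsilon, p, \ell}$ onto itself.

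\emph{Step 2: $\pi$ preserves supports.} Each $h \in H_{\epsilon, p, \ell}$ has a finite support $\operatorname{supp}(h) \subseteq \lambda$ in the direct sum decomposition of $(*)_2$(iv). From the HNN construction in the proof of Lemma \ref{morecontrol} (via \cite[Theorem IV.2.1]{LS}), the element $h$ commutes with $z_{\epsilon, \gamma}$ if and only if $\operatorname{supp}(h) \subseteq Y$, where $Y$ is the index set attached to that stable letter. Commutation is preserved by $\pi$ and $\pi$ fixes $z_{\epsilon,\gamma}$, so $h$ and $\pi(h)$ commute with exactly the same $z_{\epsilon,\gamma}$. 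Now suppose $\alpha \in \operatorname{supp}(\pi(h)) \setminus \operatorname{supp}(h)$. Condition $(*)_2$(iii), applied to the $\alpha_r$ ranging over $\operatorname{supp}(h)$ and the single hatted element $\alpha$, yields a $\gamma$ such that $z_{\epsilon,\gamma}$ commutes with $h$ but not with $x_{\epsilon,\alpha,p,\ell}$. Its index set therefore omits $\alpha$, so $z_{\epsilon,\gamma}$ does not commute with $\pi(h)$, a contradiction. Applying the same argument to $\pi^{-1}$ gives $\operatorname{supp}(\pi(h)) = \operatorname{supp}(h)$. Taking $h = x_{\epsilon,\alpha,p,\ell}$, whose support is $\{\alpha\}$, the only nontrivial element with that support is $x_{\epsilon,\alpha,p,\ell}$ itself, so $\pi(x_{\epsilon,\alpha,p,\ell}) = x_{\epsilon,\alpha,p,\ell}$.

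The main obstacle is the exact centralizer computations in the two HNN extensions. The first is that in $G_\delta$ the centralizer of $y$ is exactly $H_{\epsilon,p,\ell}$. The second is that $z_{\epsilon,\gamma}$ commutes with an element of $H_{\epsilon,p,\ell}$ precisely when its support lies in the associated index set. Both follow from Britton's lemma / \cite[Theorem IV.2.1]{LS}: the centralizer in the base group of a stable letter whose associated isomorphism is the identity on a subgroup $A$ is exactly $A$. For the $z$'s one also needs that the stable letter attached to $(p,\ell)$ imposes no commutation with $x$'s of a different type $(p',\ell')$; this uses the trivial-intersection condition (d) of Lemma \ref{morecontrol}. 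I would verify these two facts first and then assemble Steps 1 and 2.
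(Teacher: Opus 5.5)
Your proposal is correct and is essentially the paper's argument: you intersect the stationary set $S_\epsilon$ with the club from Claim \ref{club} to get $\pi(H_{\epsilon,p,\ell}) = H_{\epsilon,p,\ell}$, since $H_{\epsilon,p,\ell}$ is the centralizer of $y_{\delta+3,p,\ell}$ in $G_\delta$. You then use that an element of $H_{\epsilon,p,\ell}$ is determined by which $z_{\epsilon,\gamma}$ it commutes with (the paper's sets $J(g)$, which, as you note, rest on the exact HNN centralizers and condition (d)); working with $\pi_2^{-1}\pi_1$ and with supports of generators is only a cosmetic difference. The one detail to fix is that $\delta$ must also satisfy $\delta = 0 \modvar 3$, for instance by taking $\delta$ to be a limit ordinal in $C_\pi \cap S_\epsilon$ as the paper does, since otherwise $(*)_3$ does not produce the elements $y_{\delta+3,p,\ell}$.
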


\begin{proof}
Let $\epsilon < \overline{\kappa}$ be given.   Select $\epsilon < \epsilon_1 < \overline{\kappa}$ with $\epsilon_1 = 2 \modvar 3$ .  Take a limit ordinal $\delta \in C_{\pi_1} \cap C_{\pi_2} \cap S_{\epsilon_1}$.  We know $\pi_1(G_{\delta}) = \pi_2(G_{\delta}) = G_{\delta}$.  By $(*)_3$ the element $\pi_1(y_{\delta + 3, p, \ell})$ commutes with  $g \in G_{\delta}$ if and only if $g \in \pi_1(H_{\epsilon_1, p, \ell})$, and similarly $\pi_2(y_{\delta + 3, p, \ell})$ commutes with  $g \in G_{\delta}$ if and only if $g \in  \pi_2(H_{\epsilon_1, p, \ell})$.  Since $\pi_1(y_{\delta + 3, p, \ell}) = \pi_2(y_{\delta + 3, p, \ell})$ we know $\pi_1(H_{\epsilon_1, p, \ell}) =\pi_2(H_{\epsilon_1, p, \ell})$.

For $g \in H_{\epsilon_1, p, \ell}$ let $J(g) = \{\beta < \mu: [z_{\epsilon_1, \beta}, g] = 1\}$.  By $(*)_2$ (ii) we know $H_{\epsilon_1, p, \ell} = \bigoplus_{\alpha < \lambda} \langle x_{\epsilon_1, \alpha, p, \ell} \rangle$ so we can write $g \in H_{\epsilon_1, p, \ell}$ uniquely as a repetition-free sum $$g = \sum_{r = 1}^m x_{\epsilon_1, \alpha_r, p, \ell}.$$  Then $J(g) = \bigcap_{r = 1}^n J(x_{\epsilon_1, \alpha_r, p, \ell})$  where the intersection is understood to be $\mu$ if the index is empty.  Therefore for distinct $g = \sum_{r = 1}^m x_{\epsilon_1, \alpha_r, p, \ell}$ and $\hat{g} = \sum_{s = 1}^n x_{\epsilon_1, \hat{\alpha_s}, p, \ell}$ we have $J(g) \neq J(\hat{g})$ by $(*)_2$ (iii).  Then for $g \in H_{\epsilon_1, p, \ell}$ we see that

$$
\begin{array}{ll}
\{\pi_1(g)\} &  = \bigcap_{\gamma \in J(g)} \{h \in \pi_1(H_{\epsilon_1, p, \ell}): [h, \pi_1(z_{\epsilon_1, \gamma})]= 1\}\vspace*{2mm}\\
& \cap \bigcap_{\gamma \in \mu \setminus J(g)} \{h \in \pi_1(H_{\epsilon_1, p, \ell}): [h, \pi_1(z_{\epsilon_1, \gamma})]\neq 1\} \vspace*{2mm}\\
& = \bigcap_{\gamma \in J(g)} \{h \in \pi_2(H_{\epsilon_1, p, \ell}): [h, \pi_2(z_{\epsilon_1, \gamma})]= 1\}\vspace*{2mm}\\
& \cap \bigcap_{\gamma \in \mu \setminus J(g)} \{h \in \pi_2(H_{\epsilon_1, p, \ell}): [h, \pi_2(z_{\epsilon_1, \gamma})]\neq 1\} \vspace*{2mm}\\
& = \{\pi_2(g)\}. \vspace*{2mm}
\end{array}
$$

Then $\pi_1$ and $\pi_2$ agree on $H_{\epsilon_1, p, \ell}$, and since $$G_{\epsilon} \leq G_{\epsilon_1 - 1} \leq \langle \bigcup_{p, \ell \in \{1, 2\}} H_{p, \ell} \rangle$$ we know $\pi_1$ and $\pi_2$ agree on $G_{\epsilon}$, and as $\epsilon$ was arbitrary we know $\pi_1 = \pi_2$ globally.
\end{proof}

Now we can conclude that $G$ has the desired properties for Theorem \ref{powercardinal}.  Of course, $|G| = \lambda$.  We know $G$ is $\kappa$-existentially closed as it is a union, of cofinal length at least $\kappa$, of $\kappa$-existentially closed groups.  Finally, $\lambda \leq |\Aut(G)| \leq |G|^{\mu} = (2^{\mu})^{\mu} = \lambda$ by Claim \ref{agreement}.

\end{section}

\begin{section}{Requirement on the size of a $\kappa$-existentially closed group}\label{Sizeofclosed}

We prove the neccesity of the condition on cardinality in Lemma \ref{intoaclosedgroup}.  We will make use of the following from the existing literature.

\begin{lemma}\label{poweroftwo}\cite[Proposition 3]{KayaKegelKuzucuoglu}  If there exists a $\kappa$-existentially closed group of cardinality $\lambda$ then $2^{<\kappa} \leq \lambda$.
\end{lemma}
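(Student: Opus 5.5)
This is a cited result (\cite[Proposition 3]{KayaKegelKuzucuoglu}), but here is how I would prove it directly. The idea is to code, inside a $\kappa$-existentially closed group $G$ of cardinality $\lambda$, every subset of every cardinal $\theta<\kappa$ by a distinct element of $G$. If this can be done injectively for each fixed $\theta<\kappa$, then $2^\theta\le\lambda$ for all $\theta<\kappa$, hence $2^{<\kappa}\le\lambda$.

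\textbf{Step 1: an independent family in $G$.} Fix a cardinal $\theta<\kappa$. I would first find elements $(a_i: i<\theta)$ in $G$ and a further element $b\in G$ such that no two of them are ``accidentally'' related. For instance, ask that the $a_i$ generate a subgroup isomorphic to the direct sum $\bigoplus_{i<\theta}\mathbb{Z}/2$. These requirements form a system of fewer than $\kappa$ equations and inequations; if there is no parameter we may add the coefficient $1$. The system has a solution in $G*\bigoplus_{i<\theta}\mathbb{Z}/2$, so by $\kappa$-existential closedness it has a solution in $G$. The main point to check is that $\theta<\kappa$ keeps the system small. Since $\kappa$ is presumably infinite, and we only need $|\Sigma|<\kappa$, this holds as long as $\theta<\kappa$.

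\textbf{Step 2: realize each subset.} For each $X\subseteq\theta$, I want an element $z_X\in G$ that commutes with $a_i$ for $i\in X$ and does not commute with $a_i$ for $i\notin X$. This is a system of $\theta<\kappa$ equations $[z,a_i]=1$ ($i\in X$) and inequations $[z,a_i]\ne1$ ($i\notin X$), with coefficients in $G$. It has a solution in an HNN extension of $G$: adjoin a stable letter $t$ centralizing $\langle a_i:i\in X\rangle$. By Britton's lemma (as in \cite[Theorem IV.2.1]{LS}, used in the proof of Lemma \ref{morecontrol}), $t$ commutes with $g\in G$ only if $g\in\langle a_i: i\in X\rangle$. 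So $t$ fails to commute with $a_j$ for $j\notin X$, because the direct sum structure from Step 1 gives $a_j\notin\langle a_i:i\in X\rangle$. Hence $\kappa$-existential closedness yields $z_X\in G$ satisfying the system.

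\textbf{Step 3: injectivity.} If $X\ne Y$, say $j\in X\setminus Y$, then $z_X$ commutes with $a_j$ and $z_Y$ does not, so $z_X\ne z_Y$. Thus $X\mapsto z_X$ injects $\mathcal P(\theta)$ into $G$, giving $2^\theta\le\lambda$. Taking the supremum over $\theta<\kappa$ gives $2^{<\kappa}\le\lambda$.

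The step needing the most care is Step 1, namely guaranteeing $a_j\notin\langle a_i:i\in X\rangle$ within $G$ itself. The direct sum requirement is expressible by fewer than $\kappa$ equations and inequations only if we demand that each finite nontrivial product be nontrivial. There are $\theta^{<\omega}=\theta$ such conditions, so this is still below $\kappa$ and the step goes through.
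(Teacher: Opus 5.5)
Your proof is correct. Step 1 is a system of $\theta$ equations and inequations (finitely many when $\theta$ is finite), and it is solvable in $G * \bigoplus_{i<\theta}\mathbb{Z}/2$. In Step 2, Britton's lemma applied to $t^{-1}a_j t a_j^{-1}$, combined with $a_j \notin \langle a_i : i \in X\rangle$, gives the required inequations, so $X \mapsto z_X$ is injective and $2^\theta \le \lambda$ for every $\theta<\kappa$.

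The paper does not prove this lemma; it quotes it from Proposition 3 of Kaya--Kegel--Kuzucuo\u{g}lu. Your argument is nonetheless the same device the paper uses in Lemma \ref{morecontrol} and in $(*)_3$: an HNN extension whose stable letter centralizes exactly a prescribed subgroup of an elementary abelian $2$-group, so that commuting or not with basis elements records a subset.

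Two cosmetic points: the element $b$ in Step 1 is never used, and in your last paragraph ``only if'' should read ``if''.
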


\begin{theorem} \label{sizeofexclosed}  Suppose that a group $G$ is $\kappa$-existentially closed, with $\kappa$ uncountable and $|G| = \mu$.  Then $\mu^{< \kappa} = \mu$.
\end{theorem}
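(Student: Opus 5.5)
The plan is to code every short sequence of elements of $G$ by a pair of elements of $G$, obtained as a solution of a system of fewer than $\kappa$ equations. Then $\kappa$-existential closedness yields an injection $G^{\theta} \to G^2$ for every cardinal $\theta < \kappa$. Since $G$ is infinite (any nontrivial $\kappa$-existentially closed group contains elements of infinite order, by solving $x^n \neq 1$ for all $n$), $|G^2| = \mu$. Hence $\mu^{\theta} \leq \mu$ for all $\theta < \kappa$, which gives $\mu^{<\kappa} = \mu$. By Lemma \ref{poweroftwo} we also have $2^{<\kappa} \leq \mu$, although the argument below only needs $\theta \leq \mu$, and this holds once we produce $\theta$ distinct elements.

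\emph{Step 1: fixed free parameters.} Fix a cardinal $\theta < \kappa$. Consider the system in variables $(u_i : i < \theta)$ consisting of all inequations $w(u_{i_1}, \ldots, u_{i_k}) \neq 1$, where $w$ ranges over nontrivial reduced words. This system has $\theta \cdot \aleph_0 < \kappa$ members, since $\kappa$ is uncountable. It is solvable in $G * F$, where $F$ is free on $\theta$ generators. Hence it has a solution $(c_i : i < \theta)$ in $G$, that is, elements $c_i \in G$ freely generating a free subgroup of rank $\theta$. These elements are fixed once and for all.

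\emph{Step 2: coding a sequence.} Given $(g_i : i < \theta) \in G^{\theta}$, let $H = G * \langle y \rangle$ with $y$ of infinite order, and put $w_i = g_i\, y c_i y^{-1} \in H$. I claim the $w_i$ freely generate a free subgroup of $H$. The reason is that in a product of the $w_i^{\pm 1}$, the letters $y c_i^{\pm 1} y^{-1}$ interleaved with elements of $G$ can only cancel through the $y^{-1} g y$ junctions. Those junctions reduce to $c$-words, and these are nontrivial because the $c_i$ are free, so a reduced word in the $w_i$ leaves a surviving $y$-syllable. Checking this normal-form argument carefully is the main obstacle. If it turns out to be awkward, I would instead choose $w_i = g_i\, y c_i y^{-1}$ with $y$ replaced by a suitable free pair $y, y'$, to make the cancellation analysis trivial.

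Granting freeness, the map $c_i \mapsto w_i$ extends to an isomorphism $\langle c_i : i<\theta \rangle \to \langle w_i : i < \theta\rangle$. Let $H'$ be the HNN extension of $H$ along this isomorphism, with stable letter $x$. Then $G \leq H'$, and $(x, y)$ solves the system
\[
x c_i x^{-1} = g_i\, y c_i y^{-1} \qquad (i < \theta),
\]
which has $\theta < \kappa$ equations with coefficients $c_i, g_i \in G$.

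\emph{Step 3: injectivity and counting.} By $\kappa$-existential closedness, pick a solution $(a, b) \in G^2$. Then $g_i = a c_i a^{-1} b c_i^{-1} b^{-1}$ for every $i$, so the sequence $(g_i)$ is determined by $(a, b)$ together with the fixed parameters $(c_i)$. Choosing one such pair for each sequence gives an injection $G^{\theta} \hookrightarrow G^2$. Hence $\mu^{\theta} \leq \mu$ for every $\theta < \kappa$, and therefore $\mu^{<\kappa} = \mu$.
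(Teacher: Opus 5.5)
Your proof is correct, and it takes a genuinely different, much shorter route than the paper's.

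The paper argues by contradiction through cardinal arithmetic:
\begin{enumerate}
\item It takes $\sigma<\kappa$ minimal with $\mu^{\sigma}>\mu$, and then $\chi$ minimal with $\chi^{\sigma}>\mu$, and shows $\cf(\chi)=\sigma$.
\item It builds a tree of elements indexed by $\prod_{\alpha\le\beta}\chi_\alpha$ in which all branches generate canonically isomorphic subgroups. This uses a pigeonhole on the kernels $\ker(\phi_g)$, which is where the reduction to $\mu>2^{<\kappa}$ via Lemma~\ref{poweroftwo} is needed.
\item It then uses $\kappa$-existential closedness to produce conjugators carrying the constant branch onto every other branch. Distinct branches force distinct conjugators, which gives $\chi^{\sigma}>\mu$ elements of $G$.
\end{enumerate}

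You instead encode an arbitrary $\theta$-sequence by a single pair $(a,b)$ through the system $xc_ix^{-1}=g_iyc_iy^{-1}$. This gives $|G|^{\theta}\le|G|^{2}$ directly for every $\theta<\kappa$. You need no case split, no Lemma~\ref{poweroftwo}, and no computation showing $\prod_{\alpha<\sigma}\chi_\alpha=\chi^{\sigma}$.

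The price is an explicit group-theoretic coding device: a free subgroup of rank $\theta$ inside $G$ plus one HNN extension. The paper's tree construction relies mainly on the fact that tuples generating canonically isomorphic subgroups are conjugate in $G$.

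The step you flagged as the main obstacle is in fact easy. Inside $G*\langle y\rangle$, the subgroups $G$ and $yGy^{-1}$ generate their free product. The retraction $G*yGy^{-1}\to yGy^{-1}$ that kills $G$ sends $w_i\mapsto yc_iy^{-1}$, and these images form a free basis. So every nontrivial reduced word in the $w_i$ is nontrivial in $H$. You need neither a normal-form analysis nor the fallback with a second letter $y'$.
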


\begin{proof}
Assume the hypotheses.  By Lemma \ref{poweroftwo} we know that $|G| \geq 2^{< \kappa} \geq \kappa$.  If $\mu \leq 2^{< \kappa}$ then indeed $\mu^{< \kappa} = (2^{< \kappa})^{< \kappa} = 2^{< \kappa} = \mu$, so we are done in this case.  Therefore we may assume that $\mu > 2^{< \kappa}$.  Toward contradiction suppose that $\mu^{<\kappa} > \mu$.  Were it the case that $\mu^{\theta} \leq \mu$ for all $\theta < \kappa$, then as $\mu \geq \kappa$ we would have $\mu = \mu \cdot \mu \geq \sum_{\theta < \kappa} \mu^{\theta} = \mu^{<\kappa}$ contrary to our assumption.  Therefore there is some $\theta < \kappa$ such that $\mu^{\theta} > \mu$, and let $\sigma$ be minimal for which this is true.  Note that $\sigma$ is a regular cardinal, for if $\sigma = \sum_{\alpha < \cf(\sigma)} \lambda_{\alpha}$ with $\cf(\sigma), \lambda_{\alpha} < \sigma$ then $\mu^{\sigma} = \mu^{\sum_{\alpha < \cf(\sigma)} \lambda_{\alpha}} = \prod_{\alpha < \cf(\sigma)} \mu^{\lambda_{\alpha}} = \mu^{\cf(\sigma)} = \mu$, a contradiction.  Furthermore let $\chi = \min\{\lambda \mid \lambda^{\sigma} > \mu\}$, so of course $\chi \leq \mu$.  Also, clearly $\chi > 2^{<\kappa} >  \sigma$ as $\chi^{\sigma} > \mu \geq 2^{<\kappa}$.

We claim that $\cf(\chi) = \sigma$.  If on one hand $\cf(\chi) > \sigma$ then the image of any function from $\sigma$ to $\chi$ will be bounded (as $\sigma$ is regular), and so $\chi^{\sigma} = |\bigcup_{\delta < \chi} \delta^{\sigma}| \leq \chi \cdot \mu = \mu$, contradicting the choice of $\chi$.  On the other hand if $\cf(\chi) < \sigma$ then $\cf(\chi) < \sigma < \chi$ and we express $\chi = \sum_{\alpha < \cf(\chi)} \lambda_{\alpha}$, where $\lambda_{\alpha} < \chi$, so that $$\chi^{\sigma} \leq \cf(\chi)^{\sigma}\cdot \prod_{\alpha < \cf(\chi)}\lambda_{\alpha}^{\sigma} \leq \mu \cdot \mu^{\cf(\chi)} = \mu$$ is a contradiction in this case as well.  Thus indeed $\cf(\chi) = \sigma$.

Let $\{\chi_{\alpha}\}_{\alpha < \sigma}$ be an increasing sequence of cardinals below $\chi$ such that $\sum_{\alpha < \sigma} \chi_{\alpha} = \chi$ and $\chi_0 > 2^{< \kappa}$.  We construct collections $\{\{g_{\nu}\}_{\nu \in \prod_{\alpha \leq \beta} \chi_{\alpha}}\}_{\beta < \sigma}$ of elements of $G$ such that for each $\beta < \sigma$ we have

\begin{enumerate}

\item[$(\diamondsuit)_1$] if $\nu \in \prod_{\alpha \leq \beta} \chi_{\alpha}$ and $\alpha_0 < \alpha_1 < \beta$ we have $g_{\nu \upharpoonright \alpha_0} \neq g_{\nu \upharpoonright \alpha_1}$;

\item[$(\diamondsuit)_2$] if $\eta, \nu \in \prod_{\alpha \leq \beta} \chi_{\alpha}$ with $\eta(\alpha) = \nu(\alpha)$ for all $\alpha < \beta$ and $\eta(\beta) \neq \nu(\beta)$ then $g_{\eta} \neq g_{\nu}$;

\item[$(\diamondsuit)_3$] if $\eta, \nu \in \prod_{\alpha \leq \beta} \chi_{\alpha}$ then the map given by $g_{\eta\upharpoonright\alpha} \mapsto g_{\nu\upharpoonright\alpha}$ for all $\alpha \leq \beta$ induces an isomorphism from $\langle \{g_{\eta\upharpoonright\alpha}\}_{\alpha \leq \beta} \rangle$ to $\langle \{g_{\nu\upharpoonright\alpha)}\}_{\alpha \leq \beta} \rangle$.

\end{enumerate}

For $\beta = 0$ we take $\{g_{\gamma}\}_{\gamma < \chi_{0}} \subseteq G$ to be a set of distinct elements which are all of the same order (it is possible to find such a subset because $\chi_0 < \mu$).  Clearly $(\diamondsuit)_1$ -- $(\diamondsuit)_3$ are satisfied.  Assume that we have selected $\{g_{\nu}\}_{\nu \in \prod_{\alpha < \delta}}$ for all $\delta < \beta < \sigma$ satisfying $(\diamondsuit)_1$ -- $(\diamondsuit)_3$.  Note that for each $\nu, \eta \in \prod_{\alpha < \beta} \chi_{\alpha}$ the map $g_{\nu \upharpoonright \alpha} \mapsto g_{\eta \upharpoonright \alpha}$ gives an isomorphism from the subgroup $\langle \{g_{\nu \upharpoonright \alpha}\}_{\alpha < \beta} \rangle$ to $\langle \{g_{\eta \upharpoonright \alpha}\}_{\alpha < \beta} \rangle$ (by $(\diamondsuit)_3$).  Let $\zeta \in \prod_{\alpha < \beta} \chi_{\alpha}$ be constantly $0$, and for each $g \in G \setminus \langle \{g_{\zeta \upharpoonright \alpha}\}_{\alpha < \beta} \rangle$ consider the homomorphism $\phi_g$ from the free product $\langle \{g_{\zeta \upharpoonright \alpha}\}_{\alpha < \beta} \rangle * \langle x \rangle_{\infty}$ to $G$ which is identity on $\langle \{g_{\zeta \upharpoonright \alpha}\}_{\alpha < \beta} \rangle$ and with $x \mapsto g$.  Since $|\beta| < \mu$ the number of such homomorphisms $\phi_g$ is $\mu$, but the number of kernels $\ker(\phi_g)$ is at most $2^{|\beta|\cdot\aleph_0} \leq 2^{\sigma} < \mu$, so we may select a subset $\{g_{\zeta \frown \gamma}\}_{\gamma < \chi_{\beta}}$ of distinct elements of $G \setminus \langle \{g_{\zeta \upharpoonright \alpha}\}_{\alpha < \beta} \rangle$ such that $\ker(\phi_{g_{\zeta \frown \gamma}}) = \ker(\phi_{g_{\zeta \frown \gamma'}})$ for $\gamma, \gamma' < \chi_{\beta}$.  Clearly the map given by $g_{\zeta \upharpoonright \alpha} \mapsto g_{\zeta \upharpoonright \alpha}$ and $g_{\zeta \frown \gamma} \mapsto g_{\zeta \frown \gamma'}$ gives an isomorphism from $\langle \{g_{\zeta \upharpoonright \alpha}\}_{\alpha < \beta} \cup \{g_{\zeta \frown \gamma}\}\rangle$ to $\langle \{g_{\zeta \upharpoonright \alpha}\}_{\alpha < \beta} \cup \{g_{\zeta \frown \gamma'}\}\rangle$.  As $G$ is $\kappa$-existentially closed, for each $\eta \in \prod_{\alpha < \beta} \chi_{\alpha}$ select $h_{\eta} \in G$ such that $h_{\eta}^{-1} g_{\zeta \upharpoonright \alpha} h_{\eta} = g_{\eta \upharpoonright \alpha}$ for all $\alpha < \beta$ and define $g_{\eta \frown \gamma} = h_{\eta}^{-1} g_{\zeta \frown \gamma} h_{\eta}$ for each $\gamma < \chi_{\beta}$.  That $(\diamondsuit)_1$ -- $(\diamondsuit)_3$ are satisfied is clear.

Now that we have our collections $\{\{g_{\nu}\}_{\nu \in \prod_{\alpha \leq \beta} \chi_{\alpha}}\}_{\beta < \sigma}$, we note by $(\diamondsuit)_3$ that for $\nu, \eta \in \prod_{\alpha < \sigma} \chi_{\alpha}$ the function $g_{\nu \upharpoonright \alpha} \mapsto g_{\eta \upharpoonright \alpha}$ gives an isomorphism from the subgroup $\langle \{g_{\nu \upharpoonright \alpha}\}_{\alpha < \sigma} \rangle$ to $\langle \{g_{\eta \upharpoonright \alpha}\}_{\alpha < \sigma} \rangle$.  Let $\zeta \in \prod_{\alpha < \sigma} \chi_{\alpha}$ be constantly $0$ and since $G$ is $\kappa$-existentially complete select for each $\eta \in \prod_{\alpha < \sigma} \chi_{\alpha}$ an $h_{\eta} \in G$ such that $h_{\eta}^{-1} g_{\zeta \upharpoonright \alpha} h_{\eta} = g_{\eta \upharpoonright \alpha}$ for all $\alpha < \sigma$.  Note if $\eta, \nu \in \prod_{\alpha < \sigma} \chi_{\alpha}$ are distinct with $\beta < \sigma$ minimal for which $\eta(\beta) \neq \nu(\beta)$ we have by requirement $(\diamondsuit)_2$ that $g_{\eta\upharpoonright(\beta + 1)} \neq g_{\nu\upharpoonright(\beta + 1)}$ and therefore $h_{\eta}^{-1} g_{\zeta \upharpoonright (\beta + 1)} h_{\eta} = g_{\eta \upharpoonright (\beta + 1)} \neq g_{\nu \upharpoonright (\beta + 1)} = h_{\nu}^{-1} g_{\zeta \upharpoonright (\beta + 1)} h_{\nu}$.  Thus the function from $\prod_{\alpha < \sigma} \chi_{\alpha}$ to $G$ given by $\eta \mapsto h_{\eta}$ is injective.  This is a contradiction since $\prod_{\alpha < \sigma} \chi_{\alpha} = \chi^{\sigma}$.

To see this last equality, let $\sigma =  \bigsqcup_{\epsilon < \sigma} B_{\epsilon}$ with each $|B_{\epsilon}| = \sigma$ for each $\epsilon$.  Then each $B_{\epsilon}$ is unbounded in $\sigma$ and for each $\epsilon$ we have $\prod_{\alpha \in B_{\epsilon}} \chi_{\alpha} \geq \sum_{\alpha \in B_{\epsilon}} \chi_{\alpha} = \chi$.  Therefore $\prod_{\alpha < \sigma} \chi_{\alpha} = \prod_{\epsilon < \sigma} \prod_{\alpha \in B_{\epsilon}} \chi_{\alpha} \geq \chi^{\sigma}$, and that $\prod_{\alpha < \sigma} \chi_{\alpha} \leq \chi^{\sigma}$ is quite clear, so $\prod_{\alpha < \sigma} \chi_{\alpha} = \chi^{\sigma}$.

\end{proof}

By combining Theorem \ref{sizeofexclosed} with Lemma \ref{intoaclosedgroup} we obtain the following.

\begin{corollary}\label{biconditional}  If $\kappa$ is uncountable and $\mu$ is infinite, the following are equivalent.

\begin{enumerate}

\item[(A)]  There is a $\kappa$-existentially closed group of size $\mu$.

\item[(B)] $\mu^{<\kappa} = \mu$.

\end{enumerate}

\end{corollary}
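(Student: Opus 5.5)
The two implications come from different results already in the paper, so the plan is to prove each direction separately.

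For (A) $\Rightarrow$ (B), suppose $G$ is a $\kappa$-existentially closed group with $|G| = \mu$. Since $\kappa$ is uncountable, Theorem \ref{sizeofexclosed} applies directly and gives $\mu^{<\kappa} = \mu$. The theorem is stated for $|G| = \mu$ with no further hypothesis on $\mu$, so nothing else needs checking in this direction.

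For (B) $\Rightarrow$ (A), suppose $\mu^{<\kappa} = \mu$. I will apply Lemma \ref{intoaclosedgroup} with $\lambda = \mu$.
\begin{enumerate}
\item Its hypotheses are $\aleph_1 \leq \kappa$, which holds because $\kappa$ is uncountable, and $\lambda^{<\kappa} = \lambda$, which is exactly (B).
\item For the input group $L$ I take any group of cardinality at most $\mu$. The trivial group is enough, or, to be concrete, the free group on $\mu$ generators.
\item The lemma then produces a $\kappa$-existentially closed group $\overline{L} \geq L$ with $|\overline{L}| = \mu$, which is (A).
\end{enumerate}

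I see no real obstacle here: all the content sits in Theorem \ref{sizeofexclosed} and in the cited Lemma \ref{intoaclosedgroup}. The only step to be careful about is matching the lemma's hypothesis ``$\lambda = \lambda^{<\kappa}$ and $|L| \leq \lambda$'' to our data. That matching needs $\mu$ infinite, which the corollary assumes. It also needs $\kappa$ uncountable, which is required in both directions, since both Theorem \ref{sizeofexclosed} and Lemma \ref{intoaclosedgroup} assume it.
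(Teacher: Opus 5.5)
Your proposal is correct and matches the paper's own argument. The paper obtains the corollary the same way: Theorem \ref{sizeofexclosed} gives (A) $\Rightarrow$ (B), and Lemma \ref{intoaclosedgroup} with $\lambda = \mu$, applied to any group of cardinality at most $\mu$, gives (B) $\Rightarrow$ (A).
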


\end{section}

\end{document}